\documentclass[a4paper,12pt]{amsart}
\usepackage[
	hmarginratio={1:1},
	vmarginratio={1:1},
	textwidth=15.5cm,
	textheight=21cm,
	heightrounded
]{geometry}

\usepackage[utf8]{inputenc}
\usepackage{amsfonts,amstext,amsmath,amsthm,amscd,amssymb}
\usepackage[sc]{mathpazo}
\usepackage[dvipsnames]{xcolor}
\usepackage[pagebackref=false]{hyperref}
\usepackage{url}

\definecolor{dark-red}{rgb}{0.4,0.15,0.15}
\definecolor{dark-blue}{rgb}{0.15,0.15,0.4}
\definecolor{dark-green}{rgb}{0.15,0.4,0.15}
\hypersetup{
	colorlinks,
	linkcolor=dark-red,
	citecolor=dark-blue,
	urlcolor=dark-green
}

\numberwithin{equation}{section}

\theoremstyle{plain}
\newtheorem{theorem}{Theorem}[section]
\newtheorem{proposition}[theorem]{Proposition}
\newtheorem{lemma}[theorem]{Lemma}
\newtheorem{corollary}[theorem]{Corollary}

\theoremstyle{definition}
\newtheorem{definition}[theorem]{Definition}
\newtheorem{remark}[theorem]{Remark}

\newcommand{\DD}{\mathbb{D}}
\newcommand{\BB}{\mathbb{B}}
\newcommand{\s}{\mathbb{S}}
\newcommand{\ZZ}{\mathbb{Z}}
\newcommand{\As}{\operatorname{As}}
\newcommand{\Cos}{\operatorname{Cos}}
\newcommand{\Alex}{\mathcal{A}}

\title[Same $(\pi_1,\pi_2,Q)$, different first Postnikov invariants]
{Fundamental Quandles Do Not Determine the First Postnikov Invariant of 2-Knots}

\author{Micha{\l} Jab{\l}onowski}

\address{Institute of Mathematics, Faculty of Mathematics, Physics and
Informatics, University of Gda\'nsk, 80-308 Gda\'nsk, Poland}

\email{\href{mailto:michal.jablonowski@gmail.com}
{michal.jablonowski@gmail.com}}

\subjclass[2020]{57K45}
\keywords{2-knots, fundamental quandle, second homotopy group,
algebraic 2-type, Postnikov invariant, Plotnick--Suciu examples}

\date{\today}

\begin{document}

\begin{abstract}
For every admissible pair of Brieskorn parameters in the
Plotnick--Suciu construction, we obtain a pair of oriented $2$-knots
whose knot groups are isomorphic, whose second homotopy modules are
semilinearly isomorphic under a suitable group isomorphism, and whose
fundamental quandles are isomorphic, while no compatible group and
module isomorphisms carry one first Postnikov invariant to the other.
Consequently, their exteriors are not homotopy equivalent. Thus, the
knot group, the second homotopy module up to semilinear equivalence,
and the fundamental quandle do not determine the homotopy type of an
oriented $2$-knot exterior.
Using an alternative construction from Suciu's thesis based on
punctured lens spaces, the same peripheral argument yields, for every
$N\geq2$, a family of $N$ oriented $2$-knots with these properties.

We additionally show that the Tanaka--Taniguchi examples with
isomorphic knot groups and distinct fundamental quandles have pairwise
inequivalent second homotopy modules: no isomorphism between two of the
knot groups makes the corresponding second homotopy modules
semilinearly isomorphic.
\end{abstract}

\maketitle

\section{Introduction}
\label{sec:introduction}

The fundamental quandle of a codimension-two knot is a meridional
invariant which retains information that is not visible from the
abstract knot group alone. In the classical setting, the knot quandle
was introduced independently by Joyce \cite{Joy82} and Matveev
\cite{Mat82}; the same noose construction applies to knotted surfaces
and, more generally, to higher-dimensional codimension-two knots
\cite{CKS04,Win09}. For an oriented $2$-knot
$K\subset \s^4$, the fundamental quandle is determined by the knot
group together with its oriented meridional data.

This raises a natural question about the position of the fundamental
quandle relative to the homotopy-theoretic information carried by the
exterior. For a connected CW complex $X$, the algebraic $2$-type is
$
\mathcal T(X)=\bigl(\pi_1(X),\pi_2(X),k_X\bigr),
$
where $\pi_2(X)$ is a $\ZZ[\pi_1(X)]$-module and
$
k_X\in H^3\bigl(\pi_1(X);\pi_2(X)\bigr)
$
is the first Postnikov invariant. The module $\pi_2$ and the class
$k_X$ arise from related chain-level information, but $k_X$ is not
determined by the isomorphism class of the pair $(\pi_1,\pi_2)$
\cite{Lom81,Plot83,PS85}. The main question of this paper is whether
adjoining the fundamental quandle to that pair can recover the missing
Postnikov information.

The examples come from the construction of Plotnick and Suciu
\cite{PS85}. For suitable fibered $2$-knots, their construction gives
two exteriors $\widehat X,\widehat X'$ with common fundamental group
$
\Pi
=
\operatorname{BS}(1,2)*_{\langle x\rangle}H,
\;
\operatorname{BS}(1,2)
=
\langle t,x\mid txt^{-1}=x^2\rangle,
$
and isomorphic $\ZZ[\Pi]$-modules $\pi_2$, while their first Postnikov
classes lie in distinct orbits under all compatible group and module
isomorphisms. The essential additional observation made here is that
the corresponding oriented $2$-knots also have isomorphic fundamental
quandles.

The point is geometric and peripheral. Both Plotnick--Suciu
constructions contain the same boundary piece
$
W=\s^1_t\times\BB^3.
$
After choosing a common orientation of the distinguished meridional
loop
$
\mu_t=\s^1_t\times\{p'\}
$
and orienting the two final $2$-knots so that $\mu_t$ is positive in
both cases, the based identifications with $\Pi$ send both positive
meridians to the same literal element $t\in\Pi$. Consequently, the two
oriented peripheral triples are represented by
$
\bigl(\Pi,\langle t\rangle,t\bigr),
$
and the peripheral coset description of the fundamental quandle gives
$
Q(\widehat K)
\cong
\Cos\bigl(\Pi,\langle t\rangle,t\bigr)
\cong
Q(\widehat K').
$

Combining this observation with the Plotnick--Suciu calculation yields
the main result, Theorem~\ref{thm:main-PS}: there are oriented
$2$-knots whose compatible pairs $(\pi_1,\pi_2)$ are isomorphic and
whose fundamental quandles are isomorphic, but whose first Postnikov
invariants are inequivalent under every compatible group and module
isomorphism.  Thus, knowing both the isomorphism class of the compatible
pair $(\pi_1,\pi_2)$ and the isomorphism class of the fundamental
quandle does not determine the homotopy type of the exterior, and hence
does not determine the first Postnikov invariant.

Suciu's thesis \cite[Chapter~III]{Suc84} gives an independent
construction of arbitrarily large families of $2$-knots with common
fundamental group and isomorphic second homotopy modules but pairwise
inequivalent first Postnikov invariants. In that construction the
exteriors are obtained by surgery in
$\s^1_t\times\BB^3\,\#\,Y_{p,q}$ on the same word
$txt^{-1}x^{-2}$, and the distinguished meridian coming from the
$\s^1_t\times\BB^3$ piece is represented by the same element $t$
throughout \cite[Chapter~III, \S3]{Suc84}. Applying the peripheral
argument above therefore shows that the fundamental quandles also
agree; see Corollary~\ref{cor:arbitrarily-many}.

It is important that the orientation change used by Plotnick and
Suciu occurs in an internal closed fiber summand. Their two inputs use
$\Sigma_1$ and $-\Sigma_1$, respectively, in order to change the
Postnikov class. This should not be confused with reversing the
orientation of the final embedded $2$-sphere. The present paper uses
the former operation only; orientation reversal and non-reversibility
of the final $2$-knots are not part of the main argument.

The question of how much information the fundamental quandle contains
beyond the abstract knot group has recently been investigated
explicitly for $2$-knots. Tanaka and Taniguchi \cite{TT26} constructed
infinitely many triples
$
F_p=\tau^p(T_{q,r}),\;
F_q=\tau^q(T_{r,p}),\;
F_r=\tau^r(T_{p,q}),
$
for suitable pairwise coprime integers $p,q,r>1$, such that
$
G(F_p)\cong G(F_q)\cong G(F_r),
$
whereas their fundamental quandles are mutually non-isomorphic. Their
distinction is detected by quandle type:
$
\operatorname{type}Q(F_p)=p,\;
\operatorname{type}Q(F_q)=q,\;
\operatorname{type}Q(F_r)=r.
$
Thus, the abstract knot group does not determine the fundamental
quandle.

We show that these examples do not already realize the phenomenon of
the present paper. Although their second homotopy groups are mutually
isomorphic as abstract abelian groups, the corresponding
$\ZZ[\pi_1]$-module structures are pairwise inequivalent under every
isomorphism of the knot groups. Hence, the compatible pairs
$(\pi_1,\pi_2)$ already distinguish the three Tanaka--Taniguchi
examples. This is the content of
Theorem~\ref{thm:TT-pi2-modules-distinct}.

A related comparison is provided by Suciu's family of fibered ribbon
$2$-knots. Suciu constructed infinitely many ribbon $2$-knots $R_k$
whose knot groups are all isomorphic to the trefoil knot group while
their second homotopy modules are pairwise non-isomorphic
\cite{Suc85}. Their fundamental quandles are also mutually
non-isomorphic \cite{Jab25,Yas25,ST26}. Again, therefore, the
$\pi_2$-module already separates the examples. The Plotnick--Suciu pair
considered here occupies a different position: both $\pi_1$ and
$\pi_2$ agree, and the fundamental quandle also agrees, but the first
Postnikov invariant does not.

A closely related meridional phenomenon appears in recent work of
Bais et al.~\cite{BDPetal25}, who construct non-isotopic
codimension-two knots with orientation-preservingly diffeomorphic
traces and distinguish them by finite-group representation counts in
which the conjugacy class of the meridian is prescribed. In the
$2$-knot case such counts also obstruct isomorphism of the fundamental
quandles: a quandle isomorphism induces, through the associated group,
an isomorphism of knot groups carrying a positive meridian to a
conjugate of a positive meridian, and therefore preserves every such
meridian-prescribed representation count. This gives another
illustration that the abstract knot group alone does not retain the
relevant meridional marking.

The Alexander module does not provide additional information beyond
the fundamental quandle in this comparison. Indeed, we show in
Proposition~\ref{prop:quandle-determines-Alexander} that the Alexander
module of an oriented $2$-knot is determined by its fundamental quandle. 
For both the Brieskorn Plotnick--Suciu pair and Suciu's lens-space
family considered below, the common module can be computed explicitly.
For the former,
$
\Alex(\widehat K)
\cong
\Alex(\widehat K')
\cong
\ZZ[t^{\pm1}]/(t-2),
$
and Corollary~\ref{cor:suciu-alexander} gives the same module for
every member of Suciu's lens-space family.
Thus, the examples agree not only in their fundamental groups, second
homotopy modules, and fundamental quandles, but also in their
Alexander modules and Alexander polynomials. Nevertheless, their
first Postnikov invariants remain inequivalent.

The organization of the paper is as follows.
Section~\ref{sec:basic-invariants} fixes the noose convention, the
fundamental quandle, the associated group, the peripheral triple, and
the algebraic $2$-type, and recalls the peripheral coset description
of the fundamental quandle.
Section~\ref{sec:tanaka-taniguchi-pi2} analyzes the
Tanaka--Taniguchi examples and proves that their compatible second
homotopy modules are pairwise non-isomorphic.
Section~\ref{sec:plotnick-suciu} recalls the relevant part of the
Plotnick--Suciu construction, specializes it to Brieskorn homology
spheres, identifies the common positive meridian, and proves the main
result that the two knots have the same compatible $(\pi_1,\pi_2)$ data
and fundamental quandle but inequivalent first Postnikov invariants.
It then applies the same peripheral argument to Suciu's lens-space
construction to obtain arbitrarily large families with the same
properties, and concludes by computing the Alexander modules of both
families explicitly.


\section{Basic invariants and peripheral data}
\label{sec:basic-invariants}

Let $K\subset \s^4$ be an oriented smooth $2$-knot (in the standard $\s^4$) and let
$
X_K=\s^4\setminus\operatorname{int}N(K)
$
be its compact exterior. Put
$
G_K=\pi_1(X_K),
\;
R_K=\ZZ[G_K].
$
If a basepoint $x_0\in X_K$ is fixed, the pair
$
(X_K,x_0)
$
is called the \emph{pointed exterior} of $K$.
The inclusion $X_K\hookrightarrow \s^4\setminus K$ is a homotopy
equivalence, so either exterior model may be used to compute homotopy
groups.

We freely pass between the standard $\mathbb R^4$ and $\s^4$ descriptions of
surface-knots when convenient; the corresponding ambient-isotopy
classes are naturally identified
\cite[\S1.5]{Kam17}.

\subsection{Nooses and fundamental quandles}
\label{subsec:nooses-fundamental-quandles}

We fix the following conventions. If paths $\gamma$ and $\delta$
are composable, then
$
\gamma\cdot\delta
$
means that $\gamma$ is traversed first and $\delta$ second. The
boundary of an oriented disk $D$ is always given its boundary
orientation.

\begin{definition}[Quandle]
	\label{def:quandle}
	A \emph{quandle} is a nonempty set $Q$ together with a binary operation
	$
	*\colon Q\times Q\to Q
	$
	satisfying the following conditions for all $x,y,z\in Q$:
	\begin{enumerate}
		\item
		$
		x*x=x;
		$
		\item the right translation
		$
		S_y\colon Q\to Q,
		\;
		S_y(x)=x*y,
		$
		is a bijection;
		\item
		$
		(x*y)*z=(x*z)*(y*z).
		$
	\end{enumerate}
\end{definition}

\begin{definition}[Type of a quandle]
	Let $Q$ be a quandle, and for $y\in Q$ let
	$
	S_y\colon Q\to Q,
	\;
	S_y(x)=x*y.
	$
	The \emph{type} of $Q$ is
	$
	\operatorname{type}(Q)
	=
	\min\bigl\{
	n\geq 1
	\mid
	S_y^n=\operatorname{id}_Q
	\text{ for every }y\in Q
	\bigr\},
	$
	if such an integer exists, and
	$
	\operatorname{type}(Q)=\infty
	$
	otherwise.
\end{definition}

\begin{definition}[Associated group of a quandle]
\label{def:associated-group}
Let $Q=(Q,*)$ be a quandle. Its \emph{associated group} is
$
\As(Q)
=
\left\langle e_x\ (x\in Q)
\;\middle|\;
 e_{x*y}=e_y^{-1}e_xe_y
\text{ for all }x,y\in Q
\right\rangle.
$

A quandle homomorphism $f\colon Q\to Q'$ induces a group homomorphism
$$
\As(f)\colon\As(Q)\to\As(Q'),
\;
 e_x\mapsto e_{f(x)},
$$
so $\As(-)$ is functorial.
\end{definition}

Let
$
K\subset \s^4
$
be a smooth $2$-knot, let $N(K)\cong K\times \DD^2$ be a closed tubular
neighbourhood, and let
$
X_K=\s^4\setminus\operatorname{int}N(K)
$
be its compact exterior. Fix a basepoint
$
x_0\in X_K.
$

\begin{definition}[Meridional disk]
	\label{def:meridional-disk}
	A \emph{meridional disk} of $K$ is a smoothly embedded disk
	$
	D\subset N(K)
	$
	such that:
	\begin{enumerate}
		\item $D$ meets $K$ transversely in exactly one point;
		\item
		$
		D\cap K=\operatorname{int}D\cap K;
		$
		\item
		$
		\partial D\subset\partial N(K)\subset X_K.
		$
	\end{enumerate}
	The orientation of $D$ induces an orientation on its boundary
	$\partial D$.
\end{definition}

\begin{definition}[Noose]
	\label{def:noose}
	A \emph{noose} of $K$, based at $x_0$, is a pair
	$
	(D,\alpha),
	$
	where $D$ is an oriented meridional disk and
	$
	\alpha\colon[0,1]\to X_K
	$
	is a path satisfying
	$
	\alpha(0)\in\partial D,
	\;
	\alpha(1)=x_0.
	$
	Thus, $\alpha$ is oriented from the meridional disk towards the
	basepoint.
	
	Two nooses $(D_0,\alpha_0)$ and $(D_1,\alpha_1)$ are equivalent if
	there exists a homotopy through nooses
	$
	(D_s,\alpha_s),\; s\in[0,1],
	$
	such that the terminal endpoint of every $\alpha_s$ is the basepoint
	$x_0$, and the orientations of the meridional disks are preserved
	throughout the homotopy. 
	The homotopy class of a
	noose is denoted by
	$
	[(D,\alpha)].
	$
\end{definition}

The noose $(D,\alpha)$ determines the based meridian
$
m(D,\alpha)
=
\alpha^{-1}\cdot\partial D\cdot\alpha
\in\pi_1(X_K,x_0),
$
where $\partial D$ denotes the boundary loop beginning and ending at
$\alpha(0)$.

An orientation of $K$, together with the fixed
orientation of $\s^4$, selects one of the two possible orientations of a
meridional disk.

\begin{definition}[Positive meridional disk]
	\label{def:positive-meridional-disk}
	Suppose that $K$ is oriented. The normal bundle of $K$ is oriented by
	the convention
	$
	\operatorname{or}(TK)\wedge
	\operatorname{or}(\nu K)
	=
	\operatorname{or}(T\s^4).
	$
	An oriented meridional disk $D$ is called \emph{positive} if its
	orientation agrees with the induced orientation of the normal bundle
	$\nu K$. Otherwise, it is called \emph{negative}.
\end{definition}

\begin{definition}[Fundamental quandle of an oriented $2$-knot]
	\label{def:oriented-fundamental-quandle}
	Let $K\subset \s^4$ be an oriented $2$-knot. Its
	\emph{fundamental quandle}, or \emph{knot quandle}, denoted by $Q(K)$,
	is the set of homotopy classes of nooses $(D,\alpha)$ whose meridional
	disk $D$ is positively oriented. The quandle operation is defined by
	$$
	\begin{aligned}
\bigl[(D_1,\alpha_1)\bigr]
*
\bigl[(D_2,\alpha_2)\bigr]
=
\bigl[
\bigl(
D_1,\,
\alpha_1\cdot\alpha_2^{-1}\cdot\partial D_2\cdot\alpha_2
\bigr)
\bigr].
	\end{aligned}
	$$
	Equivalently, the path of the first noose is followed by the based
	meridian determined by the second noose.
\end{definition}

This is the standard noose description of the fundamental quandle;
see \cite[\S4.1.2]{CKS04} and \cite[\S8.9]{Kam17}. Our path convention agrees
with the convention used there.

\begin{remark}
	\label{rem:quandle-basepoint-independence}
	Changing the basepoint and choosing a path between the old and new
	basepoints induces a quandle isomorphism. Thus, the isomorphism type of
	$Q(K)$ is independent of the chosen basepoint.
\end{remark}

\begin{definition}[Second homotopy module]
	\label{def:second-homotopy-module}
	The group $\pi_2(X_K)$ is regarded as a left $R_K$-module via
	the action of deck transformations on the universal cover
	$\widetilde X_K$. Since $\widetilde X_K$ is simply connected, the
	Hurewicz theorem gives a natural isomorphism
	$
	\pi_2(X_K)\cong H_2(\widetilde X_K;\ZZ)
	$
	of left $R_K$-modules.
\end{definition}

\begin{definition}[Algebraic $2$-type]
	\label{def:algebraic-2-type}
	The \emph{algebraic $2$-type} of a connected CW complex $X$ is the
	triple
	$
	\mathcal T(X)=\bigl(\pi_1(X),\pi_2(X),k_X\bigr),
	$
	where $\pi_2(X)$ carries its natural
	$\ZZ[\pi_1(X)]$-module structure and
	$
	k_X\in H^3\bigl(\pi_1(X);\pi_2(X)\bigr)
	$
	is the first Postnikov invariant \cite[\S~0]{Lom81}.
	
	Terminology varies in the literature: following Lomonaco \cite{Lom81}
	we call this triple the \emph{algebraic $2$-type}; authors who index
	the terminology by the degree of the first $k$-invariant sometimes
	refer to the same data as an algebraic $3$-type.
\end{definition}

The Postnikov $2$-type and its boundary-sensitive refinements also
play a central role in recent homotopy-classification results for
$4$-manifolds with boundary; see, for example,
\cite{CK25}.

If $G$ is a group and $M$ a left $\ZZ[G]$-module, a compatible
automorphism of the pair $(G,M)$ means a pair
$(\alpha,\beta)$, where
$
\alpha\in\operatorname{Aut}(G)
$
and $\beta\colon M\to M$ is an additive automorphism satisfying
$
\beta(g\cdot u)=\alpha(g)\cdot\beta(u)
$
for all $g\in G$ and $u\in M$.

\begin{remark}
	\label{rem:chain-level-k}
	For a finite $3$-complex, the cellular chain complex of the universal
	cover contains
	$
	C_3(\widetilde X)
	\xrightarrow{\partial_3}
	C_2(\widetilde X)
	\xrightarrow{\partial_2}
	C_1(\widetilde X).
	$
	The second homotopy module is
	$
	\pi_2(X)
	\cong
	\ker(\partial_2)/\operatorname{im}(\partial_3).
	$
	The first Postnikov invariant is the next obstruction to extending the
	resulting partial free resolution. Therefore, the module
	$\pi_2(X)$ and the class $k_X$ depend on related chain-level data, but
	$k_X$ is not determined merely by the isomorphism class of the quotient
	module $\pi_2(X)$ \cite[\S~1]{PS85};
	\cite[Theorem~7.1]{Lom81}.
\end{remark}

\begin{definition}[Peripheral triple]
\label{def:peripheral-triple}
For an oriented $2$-knot $K$, let
$$
P_K=\operatorname{im}\bigl(\pi_1(\partial X_K)\to G_K\bigr)
$$
and choose a positive meridian $m_K\in P_K$. The ordered triple
$
\mathcal P(K)=(G_K,P_K,m_K)
$
is called the \emph{oriented peripheral triple}. Changing the basepoint
paths conjugates $P_K$ and $m_K$ simultaneously.
\end{definition}

\begin{definition}[Peripheral coset quandle]
\label{def:coset-quandle}
Let $G$ be a group, let $P\leq G$, and let $m\in Z(P)$. The
\emph{peripheral coset quandle}
$
\Cos(G,P,m)
$
has underlying set $P\backslash G$ and operation
$
(Pg)*_m(Ph)=P\bigl(gh^{-1}mh\bigr).
$
In the $2$-knot case, $P=\langle m\rangle$, so the centrality hypothesis is
automatic.
\end{definition}

\begin{proposition}[Peripheral coset description]
\label{prop:quandle-peripheral}
Let $K\subset \s^4$ be an oriented $2$-knot with peripheral triple
$\mathcal P(K)=(G_K,P_K,m_K)$. The map
$$
\eta_K\colon\As(Q(K))\to G_K,
\;
 e_{[(D,\alpha)]}
\mapsto
\alpha^{-1}\cdot\partial D\cdot\alpha,
$$
is an isomorphism, and there is a natural quandle isomorphism
$
Q(K)\cong\Cos(G_K,P_K,m_K).
$
For a $2$-knot,
$
\partial X_K\cong \s^2\times \s^1,
\;
P_K=\langle m_K\rangle\cong\ZZ.
$
Thus, the fundamental quandle is determined by the knot group together with
the conjugacy class and orientation of its meridian
\cite{Joy82,FR92, Win09}.
\end{proposition}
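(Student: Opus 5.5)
I would first establish the boundary facts and then build an explicit map in the opposite direction, from the coset quandle back to the noose quandle. Since $K\cong\s^2$ has trivial normal bundle, $N(K)\cong \s^2\times\DD^2$ and $\partial X_K\cong \s^2\times \s^1$. Hence $\pi_1(\partial X_K)\cong\ZZ$, generated by the boundary of a fibre disk, so $P_K=\langle m_K\rangle$. This group is abelian, so $m_K\in Z(P_K)$ and Definition~\ref{def:coset-quandle} applies. Fix one positive noose $(D_0,\alpha_0)$ with $m_K=\alpha_0^{-1}\partial D_0\alpha_0$, where $P_K$ is understood as the image via the path $\alpha_0$.

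Next define $\Phi\colon Q(K)\to P_K\backslash G_K$. Any positive noose $(D,\alpha)$ can be moved, through nooses, so that $D=D_0$ with the same orientation. Here I use that the space of positively oriented meridional disks is connected: $K$ is connected and orientations are fixed. After this move the noose has the form $(D_0,\alpha)$, and I set $\Phi[(D_0,\alpha)]=P_K\,\alpha_0^{-1}\alpha$, where the path from $\alpha_0(0)$ to $\alpha(0)$ runs inside $\partial D_0$.

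\textbf{Main obstacle: well-definedness and bijectivity.} I would handle both by viewing nooses based on $D_0$ modulo homotopy as paths from the disk $D_0$, parametrised by a point of $\partial D_0$, to $x_0$.
\begin{itemize}
\item Self-homotopies of positive nooses that return to $D_0$ sweep out loops in the space of positive meridional disks. That space is homotopy equivalent to the unit normal circle bundle over $K$, namely $\partial N(K)$, together with a choice of disk.
\item Such loops change $\alpha_0^{-1}\alpha$ by left multiplication by elements of $\operatorname{im}\pi_1(\partial X_K)=P_K$.
\item Sliding the basepoint $\alpha(0)$ around $\partial D_0$ changes it by powers of $m_K$, which also lie in $P_K$.
\end{itemize}
So $\Phi$ is well defined. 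Surjectivity is clear: $(D_0,\alpha_0\gamma)\mapsto P_K\gamma$. For injectivity, two nooses whose paths differ by an element of $P_K$ can be joined by a homotopy realising that boundary loop. This is the step needing care with the $\s^2$ factor. Because $\pi_1(\s^2)=1$, only the $m_K$-direction matters, and this direction is realised by rotating within $\partial D_0$.

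Finally I verify the homomorphism property and identify the associated group.
\begin{itemize}
\item \emph{Homomorphism.} We have $\Phi([D_1,\alpha_1]*[D_2,\alpha_2])=P_K\,g_1\,(g_2^{-1}m_Kg_2)$, where $g_i=\alpha_0^{-1}\alpha_i$, because $\alpha_2^{-1}\partial D_2\alpha_2=g_2^{-1}m_Kg_2$. This matches the coset operation, giving $Q(K)\cong\Cos(G_K,P_K,m_K)$.
\item \emph{$\eta_K$ is well defined.} The defining relations of $\As(Q(K))$ hold in $G_K$: the meridian of $x*y$ is $e_y^{-1}e_xe_y$ by the formula above. So $\eta_K$ is a homomorphism.
\item \emph{Surjectivity of $\eta_K$.} $G_K$ is normally generated by $m_K$, by van Kampen applied to $\s^4=X_K\cup N(K)$ with $\s^4$ simply connected. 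Hence $\eta_K$ is onto.
\item \emph{Injectivity of $\eta_K$.} I would build an inverse $G_K\to\As(Q(K))$ by $g\mapsto e_{[D_0,\alpha_0]}^{\,?}$-conjugates. More concretely, I use the standard fact (Joyce, Winker) that for $Q=\Cos(G,\langle m\rangle,m)$ with $G$ normally generated by $m$, the map $\As(Q)\to G$ is an isomorphism. To prove it, note that $\As(Q)$ acts on $Q$ and the element $e_{P}$ generates a central-type extension.
\end{itemize}

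For the injectivity step I would follow \cite{Win09}: show that $\ker\eta_K$ is central and acts trivially on $Q$. Then use that $H_2$-type obstructions vanish, because the abelianisation of $\As(Q)$ is $\ZZ$ and the map is an isomorphism on abelianisations. I expect this to be the hardest part, and I would defer to the cited references for it.
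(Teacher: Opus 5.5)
Your treatment of the coset description is sound. It is the paper's path model made more explicit: the initial point of $\alpha_s$ moves in $\partial X_K$, so noose classes are paths from $\partial X_K$ to $x_0$ modulo left multiplication by $P_K$, and the class $P_Kg$ has based meridian $g^{-1}m_Kg$. Your homomorphism check and the surjectivity of $\eta_K$ via van Kampen are also fine.

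The gap is the injectivity of $\eta_K$. The ``standard fact'' you invoke is false. You claim $\As(\Cos(G,\langle m\rangle,m))\to G$ is an isomorphism whenever $m$ normally generates $G$. For $G=\ZZ/n$ with $n\geq2$ and $m$ a generator, the quandle is a single point, and $\As=\ZZ\to\ZZ/n$ is not injective. It still fails with $m$ of infinite order, e.g.\ for $G=\ZZ^2\rtimes_A\ZZ$ with $A=\bigl(\begin{smallmatrix}2&1\\1&1\end{smallmatrix}\bigr)$ and $m$ the stable letter.

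Your proposed justification does not repair this. An isomorphism on abelianisations only shows $\ker\eta_K\subseteq[\As(Q),\As(Q)]$, i.e.\ that $\As(Q(K))\to G_K$ is a central \emph{stem} extension. Such an extension can be nontrivial whenever $H_2(G_K;\ZZ)\neq0$. To see why, take $P=\langle m\rangle$ and any central extension $E\to G$. Then $Pg\mapsto\tilde g^{-1}\tilde m\tilde g$, for lifts $\tilde g,\tilde m$, is a well-defined quandle map, so $E$ receives a homomorphism from $\As(Q)$ lifting $\eta$. If $\eta$ were an isomorphism, every central extension of $G$ would therefore split. For the torus-bundle group above, $H_2(G;\ZZ)\cong\ZZ$, so some central extension does not split.

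The missing ingredient is topological: $H_2(X_K;\ZZ)=0$ by Alexander duality, hence $H_2(G_K;\ZZ)=0$ by Hopf's theorem. With it your outline closes:
\begin{itemize}
\item Under your coset identification, an element $w\in\As(Q(K))$ acts on $Q(K)$ by right multiplication by $\eta_K(w)$.
\item Hence any $w\in\ker\eta_K$ satisfies $w^{-1}e_qw=e_{q\cdot w}=e_q$, so it is central.
\item The five-term sequence then reads $H_2(G_K)\to\ker\eta_K\to H_1(\As(Q(K)))\xrightarrow{\cong}H_1(G_K)\to0$, which forces $\ker\eta_K=0$.
\end{itemize}
That would be a legitimate, purely algebraic alternative to the paper's route. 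The paper instead cites the topological Wirtinger--noose identification: a diagrammatic presentation of $Q(K)$ becomes a Wirtinger presentation of $G_K$ once each relation $u*v=w$ is replaced by $v^{-1}uv=w$.

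Separately, you never justify $P_K\cong\ZZ$. This needs the meridian to have infinite order in $G_K$, which holds because it maps to a generator of $H_1(X_K;\ZZ)\cong\ZZ$.
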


\begin{proof}
	For a noose $q=[(D,\alpha)]$, write
	$
	m(q)=\alpha^{-1}\cdot\partial D\cdot\alpha\in G_K.
	$
	The noose operation satisfies
	$
	m(q_1*q_2)=m(q_2)^{-1}m(q_1)m(q_2),
	$
	so the assignment
	$
	e_q\mapsto m(q)
	$
	respects the defining relations of $\As(Q(K))$.
	
	For codimension-two knots, the standard Wirtinger--noose construction
	identifies the associated group of the fundamental quandle with the knot
	group; see \cite{Joy82,FR92,Win09} and also
	\cite[Proposition~8.9.7]{Kam17}. Hence, the resulting homomorphism
	$
	\eta_K\colon\As(Q(K))\to G_K
	$
	is an isomorphism. Equivalently, passing from a quandle presentation to
	the associated-group presentation replaces each relation $u*v=w$ by the
	conjugation relation $v^{-1}uv=w$.
	
For the coset description, use the standard path model in which the
path $\alpha$ of a noose, oriented from the meridional disk towards
the basepoint, represents a coset $P_Kg\in P_K\backslash G_K$. Changing the initial
point of the path along $\partial X_K$ changes $g$ by left
multiplication by an element of $P_K$, so the noose class determines
an element of $P_K\backslash G_K$.

With this convention, the noose corresponding to $P_Kg$ has based
meridian
$
g^{-1}m_Kg.
$
Hence, if $q_1$ and $q_2$ correspond respectively to $P_Kg$ and
$P_Kh$, then the path defining $q_1*q_2$ is obtained from the first
path by appending the based meridian
$h^{-1}m_Kh$ determined by the second noose. Therefore
$
(P_Kg)*(P_Kh)
=
P_K\bigl(gh^{-1}m_Kh\bigr).
$
This is precisely
$
\Cos(G_K,P_K,m_K).
$
	
	Finally,
	$
	\pi_1(\partial X_K)\cong\ZZ,
	$
	its image is generated by the meridian, and the meridian has infinite
	order because its image generates
	$
	H_1(X_K;\ZZ)\cong\ZZ.
	$
	Hence
	$
	P_K=\langle m_K\rangle\cong\ZZ.
	$
\end{proof}

\begin{remark}[Alexander-module convention]
	Throughout, the positive generator $t$ of the deck group acts by
	the deck transformation associated with a positive meridian;
	equivalently, on the abelianisation of $\ker\varepsilon_K$ it acts
	by
	$
	[h]\longmapsto[m_Khm_K^{-1}].
	$
	With the opposite deck-generator convention one replaces $t$ by
	$t^{-1}$. Thus, the module written later as $\Lambda/(t-2)$ would
	be written as $\Lambda/(2t-1)$. These two presentations are exchanged
	by the involution $t\mapsto t^{-1}$, although $t-2$ and $2t-1$ are
	not associates in the fixed Laurent polynomial ring
	$\Lambda=\ZZ[t^{\pm1}]$.
\end{remark}

\begin{proposition}[The Alexander module is determined by the fundamental quandle]
	\label{prop:quandle-determines-Alexander}
	Let $K\subset\s^4$ be an oriented $2$-knot, let
	$
	G_K=\pi_1(X_K),
	\;
	\varepsilon_K\colon G_K\to\ZZ
	$
	be the abelianisation normalized by $\varepsilon_K(m_K)=1$, let
$X_K^\infty\to X_K$ be the associated infinite cyclic cover, on whose
homology the deck group acts, and let
$
\Alex(K)=H_1(X_K^\infty;\ZZ)
$
be its Alexander module over
$\Lambda=\ZZ[t^{\pm1}]$.

Since $X_K$ has finite CW type, $\Alex(K)$ is a finitely presented
$\Lambda$-module.
		Write $\Delta_K(t)$ for a generator, defined up to multiplication by
	a unit $\pm t^j$ of $\Lambda$, of the smallest principal ideal
	containing the zeroth Fitting ideal of $\Alex(K)$.
	Put
	$
	\Gamma_K=\As(Q(K)),
	$
	and let
	$
	\varepsilon_Q\colon\Gamma_K\to\ZZ
	$
	be the homomorphism determined by
	$\varepsilon_Q(e_q)=1$ for every $q\in Q(K)$.
	Then
	$$
	\Alex(K)
	\cong
	\frac{\ker\varepsilon_Q}
	{[\ker\varepsilon_Q,\ker\varepsilon_Q]}
	$$
	as $\Lambda$-modules, where for any $q\in Q(K)$ the action of $t$
is given by
$
t\cdot[h]=[e_qhe_q^{-1}].
$
	
	Consequently, an isomorphism
	$
	Q(K_1)\cong Q(K_2)
	$
	induces an isomorphism
	$
	\Alex(K_1)\cong \Alex(K_2)
	$
	of $\Lambda$-modules. In particular,
	$
	\Delta_{K_1}(t)\doteq\Delta_{K_2}(t).
	$
\end{proposition}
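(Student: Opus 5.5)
We transport everything to $G_K$ through the isomorphism $\eta_K$ of Proposition~\ref{prop:quandle-peripheral}, then check that the remaining steps are functorial in the quandle.

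\textbf{Step 1 (matching the augmentations).} Since $\eta_K(e_q)=m(q)$ is a based meridian, and every based meridian attached to a positive meridional disk is conjugate to $m_K$, we get $\varepsilon_K(\eta_K(e_q))=\varepsilon_K(m_K)=1$ for every $q$. Hence $\varepsilon_K\circ\eta_K$ and $\varepsilon_Q$ agree on generators, so $\varepsilon_K\circ\eta_K=\varepsilon_Q$. As $\eta_K$ is an isomorphism, it restricts to an isomorphism $\ker\varepsilon_Q\cong\ker\varepsilon_K$.

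\textbf{Step 2 (identifying the Alexander module).} The covering $X_K^\infty\to X_K$ corresponds to the subgroup $\ker\varepsilon_K$. By Hurewicz,
$$
\Alex(K)=H_1(X_K^\infty;\ZZ)\cong\ker\varepsilon_K/[\ker\varepsilon_K,\ker\varepsilon_K].
$$
Under the stated convention, the positive deck generator acts by $[h]\mapsto[m_Khm_K^{-1}]$. This action does not depend on the chosen element of $\varepsilon_K^{-1}(1)$: if $g$ has $\varepsilon_K(g)=1$, then $g=m_Kk$ with $k\in\ker\varepsilon_K$, and conjugation by $k$ is trivial on the abelianization. Applying this to $g=\eta_K(e_q)$, the isomorphism of Step 1 intertwines $t\cdot[h]=[e_qhe_q^{-1}]$ with the deck action. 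This gives the displayed $\Lambda$-module isomorphism, and shows that the action is well defined for any $q\in Q(K)$.

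\textbf{Step 3 (naturality).} Let $f\colon Q(K_1)\to Q(K_2)$ be a quandle isomorphism. Then $\As(f)$ is a group isomorphism sending generators to generators. Hence $\varepsilon_{Q_2}\circ\As(f)=\varepsilon_{Q_1}$, and $\As(f)$ maps $\ker\varepsilon_{Q_1}$ onto $\ker\varepsilon_{Q_2}$. It also commutes with conjugation by $e_q\mapsto e_{f(q)}$. So it induces a $\Lambda$-isomorphism of the abelianized kernels, and by Step 2 this is a $\Lambda$-isomorphism $\Alex(K_1)\cong\Alex(K_2)$. Isomorphic finitely presented modules have equal Fitting ideals, so the smallest principal ideals containing them coincide, giving $\Delta_{K_1}\doteq\Delta_{K_2}$. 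Finite presentability itself comes from the finite CW structure: $X_K^\infty$ has finitely many $\Lambda$-orbits of cells, and $\Lambda$ is Noetherian.

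\textbf{Main obstacle.} The only delicate point is in Steps 1 and 2: checking that every generator $e_q$ maps to an element of augmentation exactly $+1$, and that the deck-transformation convention matches conjugation by $e_q$ rather than by $e_q^{-1}$. The first relies on positivity of the meridional disks in Definition~\ref{def:oriented-fundamental-quandle}. The second is pinned down by the Alexander-module convention remark, which I would take as the definition of the $t$-action.
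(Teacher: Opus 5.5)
Your proposal is correct and follows essentially the same route as the paper. Both arguments transport through $\eta_K$, using that every positive based meridian has augmentation $1$ so that $\ker\varepsilon_Q\cong\ker\varepsilon_K$; both apply Hurewicz to the infinite cyclic cover, note that conjugation by any level-one element induces the same automorphism of the abelianized kernel, and use that $\As(f)$ preserves augmentations and intertwines the conjugation actions. The only differences are minor. The paper derives the identification of the deck action with conjugation by lifting loops and transporting basepoints, whereas you take the Alexander-module convention remark as the definition of the $t$-action. You also spell out the Fitting-ideal and finite-presentability points, which the paper leaves implicit.
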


\begin{proof}
	The relations $e_{x*y}=e_y^{-1}e_xe_y$ of Definition~\ref{def:associated-group}
	have equal total exponent on both sides, so $\varepsilon_Q$ is well defined.
	By Proposition~\ref{prop:quandle-peripheral} the canonical map
	$\eta_K\colon\As(Q(K))\to G_K$ is an isomorphism sending every canonical
	generator $e_q$ to a positive based meridian. All positive based meridians
	are conjugate in $G_K$, hence have the same image under $\varepsilon_K$, and
	that image is $\varepsilon_K(m_K)=1$. Therefore
	$\varepsilon_K\circ\eta_K=\varepsilon_Q$, and $\eta_K$ identifies
	$\ker\varepsilon_Q$ with $\ker\varepsilon_K$ and the corresponding derived
	subgroups with one another.
	
The infinite cyclic cover $X_K^\infty\to X_K$ is the connected regular
covering associated with $\ker\varepsilon_K$. Fixing a basepoint
$\tilde x_0$ over $x_0$, the projection identifies
$\pi_1(X_K^\infty,\tilde x_0)$ with $\ker\varepsilon_K$, so the Hurewicz
theorem in degree one gives
$$
\Alex(K)
=
H_1(X_K^\infty;\ZZ)
\cong
\frac{\ker\varepsilon_K}
{[\ker\varepsilon_K,\ker\varepsilon_K]}.
$$
For $g\in G_K$ let $\tau_g$ be the deck transformation with
$\tau_g(\tilde x_0)=\tilde g(1)$, where $\tilde g$ is the lift starting at
$\tilde x_0$ of a loop representing $g$; this depends only on the coset
$g\ker\varepsilon_K$, and $g\ker\varepsilon_K\mapsto\tau_g$ is an
isomorphism $G_K/\ker\varepsilon_K\cong\operatorname{Deck}$. Transporting
along $\tilde g$ identifies $\pi_1(X_K^\infty,\tau_g(\tilde x_0))$ with
$\pi_1(X_K^\infty,\tilde x_0)=\ker\varepsilon_K$, and under this
identification $(\tau_g)_*$ is $h\mapsto ghg^{-1}$. Passing to first
homology, $\tau_g$ acts by the automorphism induced by conjugation by $g$.
Since $t$ acts as the deck transformation determined by the positive
meridian $m_K$, and $\varepsilon_K(m_K)=1$, the $\Lambda$-action on
$\Alex(K)$ is conjugation by any element of $\varepsilon_K$-level one.
	Transporting this identification through $\eta_K$ gives
$
\Alex(K)
\cong
\frac{\ker\varepsilon_Q}
{[\ker\varepsilon_Q,\ker\varepsilon_Q]}.
$
	
	If $q,q'\in Q(K)$, then
	$\varepsilon_Q(e_q)=\varepsilon_Q(e_{q'})=1$, so
	$e_{q'}^{-1}e_q\in\ker\varepsilon_Q$. Conjugation by an element of
	$\ker\varepsilon_Q$ acts trivially on its abelianisation. Hence
	conjugation by $e_q$ and by $e_{q'}$ induce the same automorphism of
	the displayed quotient, giving the well-defined $\Lambda$-action.
	
	Finally, a quandle isomorphism
	$f\colon Q(K_1)\to Q(K_2)$ induces an isomorphism
	$\As(f)$ preserving the canonical augmentations. It therefore induces
	an isomorphism of the corresponding kernel abelianisations, and this
	isomorphism intertwines the action of $t$. Thus, it is
	$\Lambda$-linear.
\end{proof}

\begin{remark}[Attribution]
	\label{rem:alexander-quandle-known}
	For classical knots, the relation between the knot quandle and
	Alexander information goes back to Joyce \cite[\S17]{Joy82} and
	Matveev \cite[\S11]{Mat82}; see also Traldi \cite{Tra22}.
	In categorical language, Szymik showed that the Beck abelianisation
	of the classical knot quandle is the extended Alexander module
	\cite[Proposition~3.2]{Szy19}. The latter fits into the Crowell exact
	sequence with the usual Alexander module \cite{Cro71}.
	Proposition~\ref{prop:quandle-determines-Alexander} gives the direct
	group-theoretic formulation needed here for oriented $2$-knots,
	using the canonical identification
	$\As(Q(K))\cong G_K$.
\end{remark}


\section{The Tanaka--Taniguchi examples and their second homotopy modules}
\label{sec:tanaka-taniguchi-pi2}

Let $p,q,r>1$ be pairwise coprime integers, and define
$
F_p=\tau^p(T_{q,r}),
\;
F_q=\tau^q(T_{r,p}),
\;
F_r=\tau^r(T_{p,q}),
$
where $\tau^n(K)$ denotes the $n$-twist spin of a classical knot
$K$.

For
$
s\in\{p,q,r\},
$
write $F_s$ for the corresponding member of this triple and
$
X_s=X_{F_s}
$
for its exterior.

By Zeeman's twist-spinning theorem \cite{Zee65}, the exterior of
$\tau^s(K)$ fibers over $\s^1$, with fiber the punctured $s$-fold cyclic
branched cover of $\s^3$ along $K$ and with monodromy induced by the
canonical deck transformation.

The cyclic branched covers which occur as the closed fibers of these
three twist spins are all homeomorphic to the same Brieskorn manifold
$
M=\Sigma(p,q,r).
$

For each $s\in\{p,q,r\}$, let
$
d_s\colon M\to M
$
denote the canonical deck transformation associated with the
corresponding cyclic branched covering. Choose a $d_s$-invariant ball
$
\BB_s^3\subset M
$
centered at a fixed point of $d_s$, and put
$
V_s=M\setminus\operatorname{int}\BB_s^3.
$
Choose a basepoint
$
v_s\in\partial V_s
$
fixed by $d_s$.

All the pointed manifolds $(V_s,v_s)$ are homeomorphic. Since
$3$-manifolds have unique smooth structures up to diffeomorphism, these
homeomorphisms may be chosen smooth; after adjusting the marked boundary
point if necessary, we may therefore fix a pointed punctured manifold
$(V,v)$ and pointed diffeomorphisms
$
(V_s,v_s)\xrightarrow{\cong}(V,v).
$

Transporting the restrictions of the deck transformations under these
identifications gives based self-diffeomorphisms
$
f_s\colon (V,v)\to(V,v).
$
With these choices, the exterior $X_s$ is identified with the mapping
torus
$
X_s\cong V\times_{f_s}\s^1.
$

Since deleting an open $3$-ball does not change the fundamental group,
the inclusion $V_s\hookrightarrow M$, and hence also our chosen
identification with $V$, gives
$
A:=\pi_1(V,v)\cong\pi_1(M).
$
Put
$
\varphi_s=(f_s)_*\in\operatorname{Aut}(A).
$
\begin{theorem}[Tanaka--Taniguchi]
	\label{thm:TT-basic}
	The knot groups
	$
	\pi_1(X_p),\;
	\pi_1(X_q),\;
	\pi_1(X_r)
	$
	are mutually isomorphic. On the other hand,
	$
	\operatorname{type}Q(F_p)=p,
	\;
	\operatorname{type}Q(F_q)=q,
	\;
	\operatorname{type}Q(F_r)=r.
	$
	In particular, the three fundamental quandles are mutually
	non-isomorphic.
	
	Moreover, for every $s\in\{p,q,r\}$,
	$
	\operatorname{ord}(\varphi_s)=s.
	$
\end{theorem}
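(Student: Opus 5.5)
This theorem is attributed to Tanaka and Taniguchi \cite{TT26}, so the plan is to assemble it from the fibration structure above and from standard facts about Brieskorn spheres. There are three parts: the groups agree, the quandle types are $p$, $q$, $r$, and each monodromy has order exactly $s$.

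\emph{Knot groups.} The mapping torus description $X_s\cong V\times_{f_s}\s^1$ gives
$$
\pi_1(X_s)\cong A\rtimes_{\varphi_s}\ZZ.
$$
For $M=\Sigma(p,q,r)$ with $p,q,r$ pairwise coprime, the group $A$ is a central extension of the triangle group $T(p,q,r)$. Its centre is $\ZZ$ when $1/p+1/q+1/r<1$, and in the spherical case $A$ is the binary icosahedral group. Each deck transformation $d_s$ is an element of the $\s^1$-action defining the Seifert fibration of $M$. Every such element is isotopic to the identity through Seifert-preserving maps, so $\varphi_s$ is an inner automorphism, given by conjugation by a central-fibre-related element $c_s$ of $A$. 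Whenever $\varphi_s$ is conjugation by some $c\in A$, the map $(a,t^k)\mapsto (a c^{k},t^k)$ twists the semidirect product into $A\times\ZZ$, with the central-extension adjustments handled as needed. Since these twisted groups do not depend on $s$, the three groups are isomorphic. I would cite Zeeman/Gordon-type computations for the precise statement that the group of $\tau^s(K)$, with $s$ coprime to the relevant exponents, is this product.

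\emph{Type.} By Proposition~\ref{prop:quandle-peripheral}, $Q(F_s)\cong\Cos(G,\langle m\rangle,m)$, and the right translation by the class $Ph$ acts as $Pg\mapsto Pg\,h^{-1}mh$. Hence $S^n_{Ph}=\mathrm{id}$ for all $h$ if and only if $g\,h^{-1}m^nh\,g^{-1}\in\langle m\rangle$ for all $g,h$. For an $s$-twist spin, $m^s$ is central: the $s$-th power of the meridian is the monodromy $f_s^s$, which is isotopic to the identity rel boundary on the punctured fibre, as in Zeeman's theorem. It follows that the type divides $s$. For the lower bound, $m^n$ with $0<n<s$ acts on $A$ by $\varphi_s^n$, which is nontrivial by the order statement. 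Such an element cannot be normalised into $\langle m\rangle$ for all conjugates, so the type is exactly $s$. Distinct types then force the quandles to be non-isomorphic.

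\emph{Order of $\varphi_s$.} $\varphi_s^s=\mathrm{id}$ because $d_s$ has order $s$. The main obstacle is to show that $\varphi_s^n\neq\mathrm{id}$ for $0<n<s$. I would argue this through the Seifert structure. $\varphi_s$ is conjugation by an element $c_s$ whose image in $T(p,q,r)$ is a nontrivial rotation about the cone point of order $s$, so $c_s^n$ is noncentral in $A$ for $0<n<s$. Since the centre of the triangle group is trivial, conjugation by $c_s^n$ is nontrivial. If identifying $c_s$ explicitly is delicate, I would instead appeal directly to \cite{TT26}.
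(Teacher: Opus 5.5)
Your argument is sound in outline, but it takes a genuinely different route from the paper. The paper proves Theorem~\ref{thm:TT-basic} purely by citation: \cite[Theorem~3.1]{TT26} for the groups, \cite[Theorem~3.3]{TT26} for the types, and \cite[Proposition~3.2]{TT26} for $\operatorname{ord}(\varphi_s)=s$. You instead derive all three from the Seifert structure of $\Sigma(p,q,r)$. Since $d_s$ lies in the Brieskorn circle action, $\varphi_s$ is conjugation by the loop $c_s$ traced by the basepoint. This gives Gordon's splitting $G_s=A\times\langle c_s^{-1}\mu_s\rangle$; no ``central-extension adjustments'' are needed, because $c_s^{-1}\mu_s$ is central of infinite order. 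Centrelessness of $T(p,q,r)$ then gives $\varphi_s^n\neq\mathrm{id}$ for $0<n<s$. The type is read off from the coset model, by essentially the computation the paper later uses in Lemma~\ref{lem:TT-action-kernel}. Your route buys a self-contained proof that makes transparent why the type equals $\operatorname{ord}(\varphi_s)$. The paper's route is shorter and defers all the geometry to \cite{TT26}.

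Two steps that you only assert should be written out.

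First, identify $c_s$. Since $v$ is fixed by $d_s$, it lies on the exceptional fibre $\gamma$ of multiplicity $s$, which is the lift of the branch knot. An arc in $\s^1$ from $1$ to the element realizing $d_s$ drags $v$ some $k$ times around $\gamma$, with $\gcd(k,s)=1$. So $c_s=\gamma^k$ maps to a generator of the cone-point group $\ZZ/s\le T(p,q,r)$. Calling $c_s$ ``central-fibre-related'' is misleading: if it were a power of the regular fibre, $\varphi_s$ would be trivial.

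Second, for the lower bound on the type, use
$a\mu_s^na^{-1}=a\,\varphi_s^n(a)^{-1}\mu_s^n$,
which lies in $\langle\mu_s\rangle$ if and only if $\varphi_s^n(a)=a$. Every coset has a representative in $A$, so the $n$-th power of right translation by the base coset is the identity if and only if $\varphi_s^n=\mathrm{id}$.

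Finally, centrality of $\mu_s^s$ needs no isotopy argument, since $f_s^s=\mathrm{id}$ on the nose.
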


\begin{proof}
	The first assertion is \cite[Theorem~3.1]{TT26}.
	The statement about the quandle types follows from
	\cite[Theorem~3.3]{TT26}, while the equality
	$
	\operatorname{ord}(\varphi_s)=s
	$
	is \cite[Proposition~3.2]{TT26}.
\end{proof}

\subsection{The common underlying second homotopy group}

\begin{proposition}
	\label{prop:TT-common-abstract-pi2}
	The groups
	$
	\pi_2(X_p),
	\pi_2(X_q),
	\pi_2(X_r)
	$
	are mutually isomorphic as abstract abelian groups.
\end{proposition}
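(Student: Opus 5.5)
The plan is to compute $\pi_2(X_s)$ directly from the fibration $X_s\cong V\times_{f_s}\s^1$ and to observe that the resulting abelian group depends only on $V$, not on $f_s$. The long exact homotopy sequence of the fibration $V\to X_s\to\s^1$, together with $\pi_2(\s^1)=0=\pi_1$-kernel considerations, gives
$
\pi_2(V)\xrightarrow{\cong}\pi_2(X_s),
$
since $\pi_2(\s^1)=\pi_3(\s^1)=0$. Equivalently, the infinite cyclic cover of $X_s$ is $V\times\mathbb R\simeq V$, and covering spaces do not change $\pi_2$. Thus, for every $s\in\{p,q,r\}$, the group $\pi_2(X_s)$ is isomorphic to $\pi_2(V)$ as an abstract abelian group. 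Since $V$ was fixed once and for all, independent of $s$, this already proves the proposition. The only input needed is that the identification $X_s\cong V\times_{f_s}\s^1$ uses the common pointed manifold $(V,v)$.

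For concreteness, and for later use, I would also identify $\pi_2(V)$. Here $M=\Sigma(p,q,r)$ is aspherical: its universal cover is $\widetilde{SL}_2(\mathbb R)$, or $\s^3$ when $\pi_1$ is finite, but pairwise coprime $p,q,r>1$ forces $1/p+1/q+1/r<1$ unless $\{p,q,r\}=\{2,3,5\}$. Puncturing $M$ gives $\widetilde V=\widetilde M$ minus an $A$-orbit of open balls. Its $H_2$, which equals $\pi_2(V)$ by Hurewicz, is then the kernel of the augmentation $\ZZ[A]\to\ZZ$, generated by the boundary spheres, with one relation because $H_3(\widetilde M)=0$ in the noncompact case. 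In the Poincaré sphere case one instead obtains $\ZZ[A]\oplus$ (a correction) coming from $\widetilde M=\s^3$. This description is not needed for the abstract statement, however. It only explains that the common group is the augmentation ideal $I_A$ regarded as an abelian group.

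The main obstacle is merely bookkeeping. One must ensure that the fibre of each $X_s$ really is the same $V$ up to diffeomorphism. This is provided by the common Brieskorn manifold $M$ and the pointed identifications fixed above, after which the argument reduces to the exact sequence. The first step I would check is that $\pi_2(\s^1)=0$ and $\pi_1(V)\to\pi_1(X_s)$ is injective, which follows from the same sequence. Together these give the isomorphism with no dependence on the monodromy $\varphi_s$, which will instead enter only through the module structure addressed afterwards.
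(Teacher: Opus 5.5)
Your argument is correct and is the paper's proof: the homotopy sequence of $V\to X_s\to\s^1$ with $\pi_2(\s^1)=\pi_3(\s^1)=0$ gives $\pi_2(V)\cong\pi_2(X_s)$, and $V$ is the same for all three knots. Your covering-space variant through $V\times\mathbb R$ is equally valid.

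Do not carry forward your optional identification of $\pi_2(V)$, because it is wrong. When $A$ is infinite, $H_3(\widetilde M)=H_2(\widetilde M)=0$ imposes no relation at all. The boundary spheres then form a free basis, so $\pi_2(V)\cong\ZZ[A]$ rather than the augmentation ideal. For $\Sigma(2,3,5)$, the class $[\widetilde M]=[\s^3]$ imposes the single relation $N_A=\sum_a a$. This gives the quotient $\ZZ[A]/\ZZ N_A$, not a direct sum; compare Lemma~\ref{lem:TT-fiber-module}.
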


\begin{proof}
	For every $s\in\{p,q,r\}$, the mapping-torus structure gives a
	fibration
	$
	V\to X_s\to \s^1.
	$
	The corresponding homotopy exact sequence contains
	$
	\pi_3(\s^1)
	\to
	\pi_2(V)
	\to
	\pi_2(X_s)
	\to
	\pi_2(\s^1).
	$
	Since
	$
	\pi_2(\s^1)=\pi_3(\s^1)=0,
	$
	the inclusion of the fiber induces an isomorphism
	$
	\pi_2(V)\xrightarrow{\cong}\pi_2(X_s).
	$
	The same punctured Brieskorn manifold $V$ occurs for all three
	values $s=p,q,r$, and hence the three abstract second homotopy
	groups are mutually isomorphic.
\end{proof}

\subsection{The common fiber module}

Since $\widetilde V$ is simply connected, the Hurewicz theorem gives
$
\pi_2(V)\cong H_2(\widetilde V;\ZZ).
$
We now describe this group together with the natural $A$-action.

For this computation fix one
$
s\in\{p,q,r\}.
$
Using the pointed diffeomorphism chosen above, identify
$
(V,v)\cong(V_s,v_s),
\;
V_s=M\setminus\operatorname{int}\BB_s^3.
$
For the remainder of this computation we suppress the index $s$ on the
deleted ball and write
$
\BB^3=\BB_s^3.
$
Thus, under the above identification, $V$ is identified with
$
M\setminus\operatorname{int}\BB^3.
$
Choose one lift $B_1$ of the deleted ball
$
\BB^3\subset M.
$
The other lifts are indexed by $A$:
$
B_a=aB_1,
\; a\in A.
$
Let
$
\theta_a\in H_2(\widetilde V;\ZZ)
$
denote the homology class of the oriented boundary sphere
$\partial B_a$.

\begin{lemma}
	\label{lem:TT-fiber-module}
	Let
	$
	P=H_2(\widetilde V;\ZZ).
	$
	Then
	$
	P\cong
	\begin{cases}
		\ZZ[A]/\ZZ N_A, & A \text{ finite},\\
		\ZZ[A], & A \text{ infinite},
	\end{cases}
	\;
	N_A=\displaystyle\sum_{a\in A}a.
	$
	Under these identifications, $\theta_a$ corresponds to the class of
	$a\in\ZZ[A]$.
	
	In particular, in both cases,
	\begin{equation}
		\label{eq:theta-distinct}
		\theta_a=\theta_b
		\;\Longrightarrow\;
		a=b.
	\end{equation}
	
	The $A$-action is given by
	$
	a\cdot\theta_h=\theta_{ah},
	\; a,h\in A.
	$
\end{lemma}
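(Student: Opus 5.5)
Write $\widetilde M$ for the universal cover of $M$ and $\widetilde V=\widetilde M\setminus\bigsqcup_{a\in A}\operatorname{int}B_a$ for the universal cover of $V$; the balls $B_a$ are the lifts of $\BB^3$, permuted freely by $A$. The plan is to compute $H_2(\widetilde V)$ from the long exact sequence of the pair $(\widetilde M,\widetilde V)$, or equivalently from Mayer--Vietoris for $\widetilde M=\widetilde V\cup\bigsqcup_a B_a$. By excision,
$$
H_k(\widetilde M,\widetilde V)\cong\bigoplus_{a\in A}H_k(B_a,\partial B_a).
$$
This group is $\ZZ[A]$ in degree $3$ and vanishes in degree $2$. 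Writing $j\colon H_3(\widetilde M)\to H_3(\widetilde M,\widetilde V)$, the relevant segment of the sequence is
$$
H_3(\widetilde M)\xrightarrow{\,j\,}\bigoplus_a H_3(B_a,\partial B_a)\xrightarrow{\,\partial\,}H_2(\widetilde V)\to H_2(\widetilde M)\to 0.
$$
The connecting map sends the fundamental class of $(B_a,\partial B_a)$ to $\theta_a$. It is $A$-equivariant, since deck transformations preserve orientations: they are lifts of the identity of the oriented manifold $M$. Hence $\theta_a\mapsto a$ gives the module identification and $a\cdot\theta_h=\theta_{ah}$, once the two outer terms are determined.

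Next I would use that $M=\Sigma(p,q,r)$ is an integral homology sphere, so $\pi_1(M)=A$ is either finite (the binary icosahedral group, with $\widetilde M=\s^3$) or infinite (with $\widetilde M$ aspherical, in fact $\cong\mathbb R^3$). To keep the argument honest, I would use only the facts needed: $\widetilde M$ is a simply connected $3$-manifold, so $H_2(\widetilde M)=0$. When $\widetilde M$ is closed this follows from Poincaré duality and $H_1=0$. When $\widetilde M$ is noncompact, $H_2(\widetilde M)\cong H^1_c(\widetilde M)$, which vanishes because $\widetilde M$ has one end; alternatively one invokes asphericity. In the infinite case we also have $H_3(\widetilde M)=0$, so $\partial$ is an isomorphism and $P\cong\ZZ[A]$.

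In the finite case $\widetilde M=\s^3$ and $H_3(\widetilde M)=\ZZ$. The image of $j$ is spanned by the sum of all local fundamental classes, namely $N_A$, with a consistent sign because the deck group preserves orientation. Exactness then gives $P\cong\ZZ[A]/\ZZ N_A$. The main care point is this step: verifying that $j$ of the fundamental class has coefficient $+1$ on every ball, and fixing the boundary-orientation sign conventions.

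Finally, \eqref{eq:theta-distinct}. In the infinite case it is immediate, since distinct group elements are distinct basis elements. In the finite case, $a-b\in\ZZ N_A$ forces $a=b$ whenever $|A|\geq 3$, because $a-b$ has at most two nonzero coefficients while a nonzero multiple of $N_A$ has $|A|$ of them. This holds here since $|A|=120$. More generally the implication needs $|A|\ge 3$, or simply $A\neq 1$ when $a\neq b$ is compared with $kN_A$ having all coefficients equal: if $|A|=2$, then $a-b$ has coefficients $1,-1$, which are not equal. The remaining check is that $M$ really is not $\s^3$ here, since $p,q,r>1$ are pairwise coprime, so $A\ne1$.
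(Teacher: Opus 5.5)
Your proposal is correct and is essentially the paper's proof: the long exact sequence of the pair $(\widetilde M,\widetilde V)$ with excision onto the lifted balls, the vanishing of $H_2(\widetilde M)$, the fundamental class of $\widetilde M$ mapping to $N_A$ when $A$ is finite and $H_3(\widetilde M)=0$ by asphericity when $A$ is infinite, and a coefficient comparison for $\theta_a=\theta_b\Rightarrow a=b$. Your point that deck transformations preserve orientation (needed for equivariance and for the uniform $+1$ coefficients in $N_A$) makes explicit what the paper leaves implicit; your closing check that $A\neq1$ is unnecessary, since for trivial $A$ there are no $a\neq b$ and the implication holds vacuously.
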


\begin{proof}
	The inverse image of the deleted ball $\BB^3\subset M$ in the universal
	cover $\widetilde M$ is the disjoint union
	$
	\coprod_{a\in A}B_a,
	$
	where $B_a=aB_1$. Hence
	$
	\widetilde V
	=
	\widetilde M\setminus
	\bigcup_{a\in A}\operatorname{int}B_a.
	$
	By excision,
	$
	H_3(\widetilde M,\widetilde V;\ZZ)
	\cong
	\bigoplus_{a\in A}
	H_3(B_a,\partial B_a;\ZZ)
	\cong
	\ZZ[A].
	$
	The corresponding part of the homology exact sequence is
	$
	H_3(\widetilde M;\ZZ)
	\to
	\ZZ[A]
	\to
	H_2(\widetilde V;\ZZ)
	\to
	H_2(\widetilde M;\ZZ).
	$
	
	If $A$ is finite, then $\widetilde M$ is a closed, simply connected,
	oriented $3$-manifold. Hence
	$
	H_3(\widetilde M;\ZZ)\cong\ZZ,
	\;
	H_2(\widetilde M;\ZZ)
	\cong
	H^1(\widetilde M;\ZZ)
	=
	0.
	$
	Under the excision identification above, the fundamental class
	$[\widetilde M]$ maps to the sum of the relative fundamental classes
	of the lifted balls:
	$
	[\widetilde M]
	\longmapsto
	N_A=\sum_{a\in A}a.
	$
	Therefore
	$
	H_2(\widetilde V;\ZZ)
	\cong
	\ZZ[A]/\ZZ N_A.
	$
	
	If $A$ is infinite, then the Brieskorn homology sphere
	$M=\Sigma(p,q,r)$ is aspherical; for pairwise coprime
	$p,q,r>1$, the only spherical case is the Poincar\'e sphere
	$\Sigma(2,3,5)$; see, for example, \cite{Mil75}. Hence
	$
	H_2(\widetilde M;\ZZ)
	=
	H_3(\widetilde M;\ZZ)
	=
	0,
	$
	and consequently
	$
	H_2(\widetilde V;\ZZ)\cong\ZZ[A].
	$
	
	It remains to verify \eqref{eq:theta-distinct}. If $A$ is infinite,
	the identification
	$
	H_2(\widetilde V;\ZZ)\cong\ZZ[A]
	$
	makes this immediate. If $A$ is finite, equality
	$\theta_a=\theta_b$ would imply
	$
	a-b\in\ZZ N_A,
	$
	which is impossible for $a\neq b$ by comparison of coefficients.
	Thus
	$
	\theta_a=\theta_b\Longrightarrow a=b
	$
	in either case.
	
	Finally, a deck transformation $a\in A$ sends
	$
	B_h\longmapsto B_{ah},
	$
	and therefore
	$
	a\cdot\theta_h=\theta_{ah}.
	$
\end{proof}

\subsection{The action of the full knot group}

Let $\mu_s$ be the positive meridian of $F_s$. The mapping-torus
description gives
$
G_s:=\pi_1(X_s)
\cong
A\rtimes_{\varphi_s}\langle\mu_s\rangle,
$
where
$
\mu_s a\mu_s^{-1}=\varphi_s(a),
\; a\in A.
$

Although Gordon's description \cite{Gor72} gives an abstract
isomorphism of these knot groups with $A\times\ZZ$, we retain the
mapping-torus semidirect-product presentation because it records the
distinguished meridian $\mu_s$ and its specific action $\varphi_s$ on
the fiber group. It is this marked action, rather than merely the
abstract isomorphism type of $G_s$, that is used below.

Thus, we write elements of $G_s$ as pairs
$
(a,n),
\;
a\in A,\; n\in\ZZ,
$
with $\mu_s=(1,1)$.

\begin{lemma}
	\label{lem:TT-full-action}
	Under the identification
	$
	\pi_2(X_s)
	\cong
	H_2(\widetilde V;\ZZ)=P,
	$
	the action of $G_s$ is
	$
	(a,n)\cdot\theta_h
	=
	\theta_{a\varphi_s^n(h)}
	$
	for all
	$
	a,h\in A,\; n\in\ZZ.
	$
\end{lemma}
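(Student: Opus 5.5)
The plan is to compute the action of $G_s$ on $\pi_2(X_s)$ through the universal cover of the mapping torus, and to check it separately on the two kinds of generators: fiber elements $(a,0)$ and the meridian $\mu_s=(1,1)$. Since $(a,n)=(a,0)\cdot\mu_s^n$ in $A\rtimes_{\varphi_s}\langle\mu_s\rangle$, the general formula then follows, because the action is a group action:
$$
(a,n)\cdot\theta_h=a\cdot\bigl(\mu_s^n\cdot\theta_h\bigr)=a\cdot\theta_{\varphi_s^n(h)}=\theta_{a\varphi_s^n(h)}.
$$

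\emph{Fiber elements.} The universal cover of $X_s=V\times_{f_s}\s^1$ is $\widetilde X_s\cong\widetilde V\times\mathbb R$. The fiber inclusion $V\hookrightarrow X_s$ lifts to an equivariant inclusion $\widetilde V\times\{0\}\hookrightarrow\widetilde X_s$ with respect to $A\hookrightarrow G_s$. The isomorphism $\pi_2(V)\to\pi_2(X_s)$ of Proposition~\ref{prop:TT-common-abstract-pi2} corresponds on $H_2$ of universal covers to this inclusion, which is a homotopy equivalence. Hence $A$-equivariance holds, and Lemma~\ref{lem:TT-fiber-module} gives $(a,0)\cdot\theta_h=\theta_{ah}$.

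\emph{The meridian.} The deck transformation of $\widetilde V\times\mathbb R$ corresponding to $\mu_s$ is $(y,u)\mapsto(\widetilde f_s(y),u+1)$, up to orientation conventions. Here $\widetilde f_s$ is the lift of $f_s$ fixing the chosen basepoint lift $\tilde v$, and it satisfies $\widetilde f_s(a y)=\varphi_s(a)\widetilde f_s(y)$. Translation in $\mathbb R$ is homotopic to the identity, so on $H_2(\widetilde X_s)\cong H_2(\widetilde V)$ the element $\mu_s$ acts by $(\widetilde f_s)_*$. It then remains to show $(\widetilde f_s)_*\theta_h=\theta_{\varphi_s(h)}$. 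Since $f_s$ is the restriction of the deck transformation $d_s$ of $M$ to $V$, it extends over the deleted ball $\BB^3_s$, which is $d_s$-invariant and centered at a fixed point. Therefore $\widetilde f_s$ extends to a lift $\widetilde d_s$ on $\widetilde M$ permuting the lifted balls. If the basepoint $\tilde v$ is chosen on $\partial B_1$, then $\widetilde d_s$ fixes $\tilde v\in\partial B_1$ and hence maps $B_1$ to itself. Equivariance then gives $\widetilde d_s(B_h)=\widetilde d_s(hB_1)=\varphi_s(h)B_1=B_{\varphi_s(h)}$.

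\emph{Main obstacle: orientations.} The step I expect to be delicate is showing that $\widetilde d_s$ preserves the orientation of the boundary spheres, so that the result is $\theta_{\varphi_s(h)}$ rather than $-\theta_{\varphi_s(h)}$. This holds because $d_s$ is an orientation-preserving periodic diffeomorphism of $M$: it is a rotation about the branch set, being isotopic to the identity through the periodic flow. It therefore preserves the orientation of each invariant ball and of its boundary sphere. A second, smaller point is to match the sign convention relating $\mu_s$ to $f_s$ versus $f_s^{-1}$ with the stated relation $\mu_s a\mu_s^{-1}=\varphi_s(a)$. I would fix the deck convention so that conjugation by $\mu_s$ on $A$ is exactly $(f_s)_*$, which matches the semidirect product as written.
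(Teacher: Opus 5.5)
Your proposal is correct and takes the same route as the paper. The fiber subgroup $A$ acts by deck transformations as in Lemma~\ref{lem:TT-fiber-module}, and $\mu_s$ acts through the lift $\widetilde f_s$ fixing the basepoint lift; that lift preserves $\partial B_1$, satisfies $\widetilde f_s(B_h)=B_{\varphi_s(h)}$ by equivariance, and preserves orientations because the deck transformation does. Your extra details are implicit in the paper's argument and are sound: the explicit universal cover $\widetilde V\times\mathbb R$, and choosing the basepoint lift on $\partial B_1$ so that the indexing of the classes $\theta_h$ is the right one.
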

\begin{proof}
The inclusion of the fiber
$
V\hookrightarrow X_s
$
induces the isomorphism
$
\pi_2(V)\xrightarrow{\cong}\pi_2(X_s)
$
from Proposition~\ref{prop:TT-common-abstract-pi2}. We use this
isomorphism to transport the natural $G_s=\pi_1(X_s)$-action on
$\pi_2(X_s)$ to
$
H_2(\widetilde V;\ZZ).
$
The subgroup $A=\pi_1(V)$ acts by deck transformations, while the
meridian $\mu_s$, corresponding to the positive generator of the
mapping-torus direction, acts through a lift of the monodromy.
The action of $A$ was computed in Lemma~\ref{lem:TT-fiber-module}:
$
a\cdot\theta_h=\theta_{ah}.
$

The monodromy of the $s$-twist spin is induced by the canonical
generator of the cyclic transformation group of the branched covering
$M\to \s^3$. The corresponding lift
$
\widetilde f_s\colon\widetilde V\to\widetilde V
$
which fixes the chosen lift of the basepoint preserves the corresponding
boundary component $\partial B_1$. Moreover, with the convention
$
\mu_s a\mu_s^{-1}=\varphi_s(a),
$
it satisfies
$
\widetilde f_s(hx)
=
\varphi_s(h)\widetilde f_s(x)
$
for every $h\in A$ and $x\in\widetilde V$.
Consequently,
$
\widetilde f_s(B_h)=B_{\varphi_s(h)}.
$
Since the canonical deck transformation is orientation preserving, the
induced map preserves the orientations of the boundary spheres. Hence
$
\mu_s\cdot\theta_h
=
\theta_{\varphi_s(h)}.
$
Iterating gives
$
\mu_s^n\cdot\theta_h
=
\theta_{\varphi_s^n(h)}.
$
Combining this with the $A$-action yields
$
(a,n)\cdot\theta_h
=
\theta_{a\varphi_s^n(h)}.
$
\end{proof}

\subsection{The kernel of the module action}

For each $s\in\{p,q,r\}$, let
$$
\rho_s\colon
G_s\to
\operatorname{Aut}_{\ZZ}\bigl(\pi_2(X_s)\bigr)
$$
denote the action homomorphism.

The subgroup
$
\ker\rho_s
=
\{g\in G_s\mid
g\cdot u=u
\text{ for every }
u\in\pi_2(X_s)\}
$
is intrinsic to the $G_s$-module $\pi_2(X_s)$.

Indeed, suppose that
$
\alpha\colon G_s\to G_t
$
is a group isomorphism and
$
\beta\colon\pi_2(X_s)\to\pi_2(X_t)
$
is an additive isomorphism satisfying
$
\beta(g\cdot u)
=
\alpha(g)\cdot\beta(u)
$
for every $g\in G_s$ and $u\in\pi_2(X_s)$. Then necessarily
$
\alpha(\ker\rho_s)=\ker\rho_t.
$

\begin{lemma}
	\label{lem:TT-action-kernel}
	For every $s\in\{p,q,r\}$,
	$
	\ker\rho_s
	=
	\langle\mu_s^s\rangle.
	$
\end{lemma}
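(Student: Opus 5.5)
We compute $\ker\rho_s$ directly from the explicit action in Lemma~\ref{lem:TT-full-action}. By that lemma, an element $(a,n)\in G_s$ lies in $\ker\rho_s$ if and only if $(a,n)\cdot u=u$ for all $u\in P$. Since the classes $\theta_h$, $h\in A$, generate $P$ additively (by Lemma~\ref{lem:TT-fiber-module} they are the images of the basis elements of $\ZZ[A]$), this is equivalent to
$$
\theta_{a\varphi_s^n(h)}=\theta_h\quad\text{for every }h\in A,
$$
and by the injectivity \eqref{eq:theta-distinct} this is in turn equivalent to
$$
a\,\varphi_s^n(h)=h\quad\text{for every }h\in A.
$$

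The first step is to specialize to $h=1$. Since $\varphi_s^n$ is an automorphism, $\varphi_s^n(1)=1$, so the condition gives $a=1$. Substituting $a=1$ back, the condition becomes $\varphi_s^n(h)=h$ for all $h$, that is, $\varphi_s^n=\operatorname{id}_A$. By Theorem~\ref{thm:TT-basic}, $\operatorname{ord}(\varphi_s)=s$, so this holds exactly when $s\mid n$. Hence
$$
\ker\rho_s=\{(1,n)\mid s\mid n\}=\langle(1,s)\rangle=\langle\mu_s^s\rangle,
$$
using $\mu_s=(1,1)$. Conversely, $\mu_s^{s}=(1,s)$ acts by $\theta_h\mapsto\theta_{\varphi_s^s(h)}=\theta_h$, which confirms the reverse inclusion.

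The only point needing care is the reduction from "acts trivially on $P$" to "fixes every $\theta_h$" and then to an equality in $A$. In the finite case $P=\ZZ[A]/\ZZ N_A$, the elements $\theta_h$ are not a basis, but they still generate $P$, and \eqref{eq:theta-distinct} still holds. That is all the argument uses, so no case split is required. The substantive input is therefore just $\operatorname{ord}(\varphi_s)=s$, which is quoted from \cite{TT26}.
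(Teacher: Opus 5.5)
Your proposal is correct and follows essentially the same route as the paper: evaluating on $\theta_1$ forces $a=1$, and then \eqref{eq:theta-distinct} reduces triviality of the action to $\varphi_s^n=\operatorname{id}_A$, which holds exactly when $s\mid n$ because $\operatorname{ord}(\varphi_s)=s$. You also state explicitly that the $\theta_h$ generate $P$, which is what the converse inclusion needs; the paper uses this without stating it.
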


\begin{proof}
	Suppose that
	$
	(a,n)\in G_s
	$
	acts trivially on $P$. Applying
	Lemma~\ref{lem:TT-full-action} to $\theta_1$ gives
	$
	\theta_a
	=
	(a,n)\cdot\theta_1
	=
	\theta_1,
	$
	because $\varphi_s^n(1)=1$. By
	\eqref{eq:theta-distinct}, this implies
	$
	a=1.
	$
	
	Thus, every element of $\ker\rho_s$ has the form
	$
	(1,n)=\mu_s^n.
	$
	For every $h\in A$,
	$
	\mu_s^n\cdot\theta_h
	=
	\theta_{\varphi_s^n(h)}.
	$
	By \eqref{eq:theta-distinct}, this action is trivial if and only if
	$
	\varphi_s^n(h)=h
	$
	for every $h\in A$, equivalently,
	$
	\varphi_s^n=\operatorname{id}_A.
	$
	By Theorem~\ref{thm:TT-basic},
	$
	\operatorname{ord}(\varphi_s)=s.
	$
	Therefore
	$
	\varphi_s^n=\operatorname{id}_A
	\;\Longleftrightarrow\;
	s\mid n.
	$
	Hence
	$
	\ker\rho_s=\langle\mu_s^s\rangle.
	$
\end{proof}

\begin{theorem}[The Tanaka--Taniguchi second homotopy modules are distinct]
	\label{thm:TT-pi2-modules-distinct}
	Let
	$
	s,t\in\{p,q,r\},
	\;
	s\neq t.
	$
	There do not exist a group isomorphism
	$
	\alpha\colon G_s\to G_t
	$
	and an additive isomorphism
	$
	\beta\colon
	\pi_2(X_s)
	\to
	\pi_2(X_t)
	$
	such that
	$
	\beta(g\cdot u)
	=
	\alpha(g)\cdot\beta(u)
	$
	for every
	$
	g\in G_s,
	\;
	u\in\pi_2(X_s).
	$
	Consequently, although the knot groups $G_p,G_q,G_r$ are mutually
	isomorphic and the groups
	$
	\pi_2(X_p),\;
	\pi_2(X_q),\;
	\pi_2(X_r)
	$
	are mutually isomorphic as abstract abelian groups, their second homotopy
	modules are pairwise non-isomorphic.
\end{theorem}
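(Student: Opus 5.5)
The plan is to turn the compatibility condition into a statement about a characteristic subgroup, and then separate $s$ from $t$ by passing to abelianizations. Suppose, for a contradiction, that $\alpha\colon G_s\to G_t$ and $\beta\colon\pi_2(X_s)\to\pi_2(X_t)$ are as in the statement.

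\textbf{Step 1: $\alpha$ matches the action kernels.} As observed just before Lemma~\ref{lem:TT-action-kernel}, compatibility forces $\alpha(\ker\rho_s)=\ker\rho_t$. Indeed, if $g$ acts trivially, then
$$
\alpha(g)\cdot\beta(u)=\beta(g\cdot u)=\beta(u)
$$
for every $u$, and $\beta$ is surjective. The same argument with $\alpha^{-1}$ and $\beta^{-1}$ gives the reverse inclusion. By Lemma~\ref{lem:TT-action-kernel},
$$
\alpha\bigl(\langle\mu_s^s\rangle\bigr)=\langle\mu_t^t\rangle.
$$

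\textbf{Step 2: compute the abelianizations.} Both kernels are infinite cyclic, so they cannot be told apart as abstract groups. I will instead compare their images in $H_1$.

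The fiber group $A=\pi_1(\Sigma(p,q,r))$ is perfect, because $\Sigma(p,q,r)$ is an integral homology sphere and so $H_1=0$. In the semidirect product $G_s=A\rtimes_{\varphi_s}\langle\mu_s\rangle$ we have $A=[A,A]\subseteq[G_s,G_s]$. The quotient $G_s/A\cong\ZZ$ is abelian, so $[G_s,G_s]\subseteq A$. Hence $[G_s,G_s]=A$, and
$$
G_s^{\mathrm{ab}}\cong\ZZ,
$$
generated by the class of $\mu_s$. This is consistent with $H_1(X_s)\cong\ZZ$ generated by a meridian. Under this identification, the image of $\ker\rho_s=\langle\mu_s^s\rangle$ in $G_s^{\mathrm{ab}}$ is exactly $s\ZZ$. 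Likewise, the image of $\ker\rho_t$ in $G_t^{\mathrm{ab}}$ is $t\ZZ$.

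\textbf{Step 3: conclude.} The isomorphism $\alpha$ induces an isomorphism $\alpha^{\mathrm{ab}}\colon\ZZ\to\ZZ$, which is $\pm\mathrm{id}$. It carries the image of $\ker\rho_s$ onto the image of $\ker\rho_t$, so $s\ZZ=t\ZZ$. Since $s,t>1$ are positive, this gives $s=t$, contradicting $s\neq t$.

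The final assertion then combines this non-existence result with Theorem~\ref{thm:TT-basic} and Proposition~\ref{prop:TT-common-abstract-pi2}.

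The only genuine input is the perfectness of $A$. Without it, the images of the kernels in the abelianization could fail to separate $s$ from $t$. Here it is guaranteed by $M$ being a homology sphere, so I expect no real obstacle. The one point to check carefully is that $\mu_s$ maps to a generator of $G_s^{\mathrm{ab}}$, and this follows directly from the semidirect-product description.
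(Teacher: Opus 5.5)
Your proposal is correct and follows the paper's proof essentially step for step: transport $\ker\rho_s=\langle\mu_s^s\rangle$ to $\ker\rho_t=\langle\mu_t^t\rangle$ via compatibility, then compare their images $s\ZZ$ and $t\ZZ$ in the infinite cyclic abelianization, on which $\alpha$ acts by $\pm1$. The only difference is that you derive $G_s^{\mathrm{ab}}\cong\ZZ$, generated by $[\mu_s]$, from perfectness of $A$, whereas the paper uses the general fact that every $2$-knot group abelianizes to $\ZZ$ generated by a meridian; this also means your closing remark overstates things, since perfectness of $A$ is not actually essential to the argument.
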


\begin{proof}
	Suppose, to the contrary, that compatible isomorphisms
	$$
	\alpha\colon G_s\to G_t,
	\;
	\beta\colon\pi_2(X_s)\to\pi_2(X_t)
	$$
	exist.
	
	Compatibility with the module actions implies
	$
	\alpha(\ker\rho_s)=\ker\rho_t.
	$
	By Lemma~\ref{lem:TT-action-kernel},
	$
	\alpha\bigl(\langle\mu_s^s\rangle\bigr)
	=
	\langle\mu_t^t\rangle.
	$
	
	Let
	$
	\varepsilon_s\colon G_s\to\ZZ,
	\;
	\varepsilon_t\colon G_t\to\ZZ
	$
	be the abelianization homomorphisms normalized by
	$
	\varepsilon_s(\mu_s)=1,
	\;
	\varepsilon_t(\mu_t)=1.
	$
	The abelianization of every $2$-knot group is infinite cyclic,
	generated by a meridian. Hence, the isomorphism induced by $\alpha$
	on abelianizations is multiplication by $1$ or by $-1$. Therefore,
	$
	\varepsilon_t\circ\alpha
	=
	\pm\varepsilon_s.
	$
	
	Now
	$
	\varepsilon_s
	\bigl(
	\langle\mu_s^s\rangle
	\bigr)
	=
	s\ZZ,
	\;
	\varepsilon_t
	\bigl(
	\langle\mu_t^t\rangle
	\bigr)
	=
	t\ZZ.
	$
	Using
	$
	\alpha\bigl(\langle\mu_s^s\rangle\bigr)
	=
	\langle\mu_t^t\rangle
	$
	we obtain
	$$
		t\ZZ
		=
		\varepsilon_t
		\bigl(
		\langle\mu_t^t\rangle
		\bigr)	
		=
		\varepsilon_t
		\left(
		\alpha
		\bigl(
		\langle\mu_s^s\rangle
		\bigr)
		\right)
		=
		\pm
		\varepsilon_s
		\bigl(
		\langle\mu_s^s\rangle
		\bigr)
		=
		s\ZZ.
	$$
	Thus,
	$
	s\ZZ=t\ZZ.
	$
	Since $s,t>0$, this implies
	$
	s=t,
	$
	contrary to the assumption.
	
	Therefore, no compatible isomorphism of the second homotopy modules exists
	when $s\neq t$.
\end{proof}

\begin{corollary}
	\label{cor:TT-pi2-conclusion}
	The Tanaka--Taniguchi examples satisfy
	$
	G_p\cong G_q\cong G_r
	$
	and
	$
	\pi_2(X_p)\cong
	\pi_2(X_q)\cong
	\pi_2(X_r)
	$
	as abstract abelian groups, but
	$
	\pi_2(X_p),\;
	\pi_2(X_q),\;
	\pi_2(X_r)
	$
	are pairwise inequivalent as second homotopy modules under
	isomorphisms of their knot groups.
	
	In particular, the Tanaka--Taniguchi family does not give examples in
	which the fundamental quandle distinguishes $2$-knots having the same
	compatible pair
	$
	(\pi_1,\pi_2).
	$
	Consequently, the compatible pairs
	$
	\bigl(G_s,\pi_2(X_s)\bigr),
	\; s\in\{p,q,r\},
	$
	are pairwise non-isomorphic: there is no group isomorphism between two
	of the knot groups with respect to which the corresponding second
	homotopy modules are semilinearly isomorphic.
\end{corollary}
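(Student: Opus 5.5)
The corollary is essentially a repackaging of results already established, so I will assemble it from them rather than prove anything new. The first clause, $G_p\cong G_q\cong G_r$, is the first assertion of Theorem~\ref{thm:TT-basic}, quoted from \cite[Theorem~3.1]{TT26}. The second clause, that $\pi_2(X_p),\pi_2(X_q),\pi_2(X_r)$ agree as abstract abelian groups, is Proposition~\ref{prop:TT-common-abstract-pi2}. That proposition used the fibration $V\to X_s\to\s^1$ and the vanishing of $\pi_2(\s^1)$ and $\pi_3(\s^1)$ to identify each $\pi_2(X_s)$ with $\pi_2(V)$ for the common punctured fiber $V$.

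For the inequivalence clause, I will invoke Theorem~\ref{thm:TT-pi2-modules-distinct} directly for each of the three unordered pairs $s\neq t$ in $\{p,q,r\}$. I will also note the definitions the statement relies on. A semilinear isomorphism of modules relative to a group isomorphism $\alpha$ is exactly an additive isomorphism $\beta$ satisfying $\beta(g\cdot u)=\alpha(g)\cdot\beta(u)$. Likewise, an isomorphism of compatible pairs $(G_s,\pi_2(X_s))\to(G_t,\pi_2(X_t))$ is precisely such a pair $(\alpha,\beta)$.

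With these definitions in place, the statements that the pairs are non-isomorphic, that the modules are pairwise inequivalent under knot-group isomorphisms, and that no group isomorphism makes the modules semilinearly isomorphic are all the same assertion. Each is the conclusion of Theorem~\ref{thm:TT-pi2-modules-distinct}. That theorem in turn rests on two facts: the kernel of the action is $\ker\rho_s=\langle\mu_s^s\rangle$ (Lemma~\ref{lem:TT-action-kernel}), and $\varepsilon_t\circ\alpha=\pm\varepsilon_s$ on abelianizations.

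The ``in particular'' clause is interpretive, and I will argue it as follows. An example in which the fundamental quandle distinguishes $2$-knots with the same compatible pair $(\pi_1,\pi_2)$ would need two members $F_s,F_t$ with isomorphic compatible pairs. By the previous paragraph no two members have this property, so the Tanaka--Taniguchi family cannot serve as such an example, even though its quandles are distinct by Theorem~\ref{thm:TT-basic}.

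I expect no real obstacle. The only care needed is to make explicit that ``inequivalent under isomorphisms of their knot groups'' quantifies over all $\alpha$, including those that reverse meridian orientation. This is already covered by the $\pm$ in the proof of Theorem~\ref{thm:TT-pi2-modules-distinct}.
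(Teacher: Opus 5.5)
Your proposal is correct and takes the same approach as the paper, whose proof simply combines Theorem~\ref{thm:TT-basic}, Proposition~\ref{prop:TT-common-abstract-pi2}, and Theorem~\ref{thm:TT-pi2-modules-distinct}. Your extra remarks make explicit what the paper leaves implicit: that unordered pairs suffice, how the ``in particular'' clause follows, and that the sign in $\varepsilon_t\circ\alpha=\pm\varepsilon_s$ covers meridian-reversing isomorphisms.
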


\begin{proof}
	Combine Theorem~\ref{thm:TT-basic},
	Proposition~\ref{prop:TT-common-abstract-pi2}, and
	Theorem~\ref{thm:TT-pi2-modules-distinct}.
\end{proof}

\begin{remark}
	\label{rem:TT-versus-PS}
	This places the Tanaka--Taniguchi examples in a different position from
	the Plotnick--Suciu examples considered below. In the
	Tanaka--Taniguchi family,
	$
	\pi_1
	\;\text{agrees, whereas}\;
	\pi_2\text{ as a }\ZZ[\pi_1]\text{-module}
	\;\text{already differs}.
	$
	For the Plotnick--Suciu pair, by contrast, both $\pi_1$ and the
	$\ZZ[\pi_1]$-module $\pi_2$ agree, and we will show that the
	fundamental quandles agree as well. Thus, the Plotnick--Suciu
	construction is the relevant setting for asking whether the additional
	quandle data determine the first Postnikov invariant once the compatible
	pair $(\pi_1,\pi_2)$ has already been fixed.
\end{remark}

\section{The Plotnick--Suciu examples}
\label{sec:plotnick-suciu}

Plotnick and Suciu begin with two fibered $2$-knots $K_1,K_2$ whose
punctured fibers are
$
\Sigma_1^\circ
=
\Sigma_1\setminus\operatorname{int}\BB^3,
\;
\Sigma_2^\circ
=
\Sigma_2\setminus\operatorname{int}\BB^3,
$
where $\Sigma_1,\Sigma_2$ are closed, oriented, aspherical
$3$-manifolds. Put
$
A_i=\pi_1(\Sigma_i),
\;
H_i=A_i\rtimes_{\sigma_i}\langle x\rangle,
\;
H=H_1*_{\langle x\rangle}H_2.
$

To avoid ambiguity, let $K_1^\dagger$ denote the knot denoted
$-K_1$ in \cite[\S~1]{PS85}: namely, the mirror image obtained by
applying an orientation-reversing diffeomorphism of the ambient
$\s^4$. This is not merely a reversal of the orientation assigned to
the embedded $2$-sphere. Its closed fiber is $-\Sigma_1$, where
$-\Sigma_1$ denotes $\Sigma_1$ with the opposite orientation, while
under the natural identification the monodromy is still $\sigma_1$.

Let
$
\sigma=\sigma_1\#\sigma_2
$
denote the connected-sum monodromy, and put
$
Y=(\Sigma_1\#\Sigma_2)\times_\sigma \s^1,
\;
Y'=((-\Sigma_1)\#\Sigma_2)\times_\sigma \s^1.
$
Thus
$
\pi_1(Y)\cong H\cong\pi_1(Y').
$
Let
$
W=\s^1_t\times\BB^3.
$
The two Plotnick--Suciu exteriors are obtained from
$
W\mathbin{\#}Y
\;\text{and}\;
W\mathbin{\#}Y',
$
respectively, by surgery on an interior loop representing
$
r=txt^{-1}x^{-2}.
$
We denote the resulting exteriors by $\widehat X$ and $\widehat X'$,
respectively. Here the framing of the surgery is chosen so that the result is
the exterior of a $2$-knot in the standard $\s^4$: filling $\widehat X$ with
$\DD^2\times\s^2$ along $\s^1\times\partial\BB^3$ kills $t$, and the resulting
closed manifold is $\s^4\mathbin{\#}Y$ surgered along $x$, that is, $\s^4$
\cite[\S~1, pp.~55--56]{PS85}. Equivalently, $\widehat X$ comes from the input
$K_1\#K_2$, while $\widehat X'$ comes from $K_1^\dagger\#K_2$
\cite[\S~1]{PS85}.

\begin{theorem}[Plotnick--Suciu]
	\label{thm:plotnick-suciu}
	Let
	$
	G=\langle t,x\mid txt^{-1}=x^2\rangle,
	\;
	\Pi=G*_{\langle x\rangle}H.
	$
Then
$
\pi_1(\widehat X)\cong\Pi\cong\pi_1(\widehat X').
$
Moreover, there exist an isomorphism
$
\alpha_0\colon
\pi_1(\widehat X)\xrightarrow{\cong}\pi_1(\widehat X')
$
and an $\alpha_0$-semilinear isomorphism
$
\beta_0\colon
\pi_2(\widehat X)\xrightarrow{\cong}\pi_2(\widehat X'),
$
that is,
$
\beta_0(g\cdot u)
=
\alpha_0(g)\cdot\beta_0(u)
$
for all $g\in\pi_1(\widehat X)$ and
$u\in\pi_2(\widehat X)$
\cite[Proposition~3.2 and Theorem~1.1]{PS85}.

If $\Sigma_1$ and $\Sigma_2$ admit no orientation-reversing homotopy
equivalences, then there is no compatible pair
$
(\alpha,\beta),
$
where
$
\alpha\colon
\pi_1(\widehat X)\xrightarrow{\cong}\pi_1(\widehat X')
$
is a group isomorphism and
$
\beta\colon
\pi_2(\widehat X)\xrightarrow{\cong}\pi_2(\widehat X')
$
is an $\alpha$-semilinear module isomorphism, which carries
$k_{\widehat X}$ to $k_{\widehat X'}$
\cite[\S\S6--7]{PS85}. In particular,
$
\widehat X\not\simeq\widehat X'.
$
\end{theorem}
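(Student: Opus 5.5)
The statement is a citation of Plotnick--Suciu, so the plan is to reconstruct its three parts in the order of the cited sources: the fundamental group, then the second homotopy module, then the obstruction for the Postnikov class.

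For the fundamental group I would use van Kampen on the surgery description. First, $\pi_1(W\# Y)\cong \langle t\rangle * H$, since the ball summand contributes nothing. Surgery on a loop in the interior of a $4$-manifold kills exactly the class of that loop. It therefore imposes the relation $txt^{-1}=x^2$, which gives
\[
\pi_1(\widehat X)\cong \bigl(\langle t\rangle*H\bigr)/\langle\!\langle txt^{-1}x^{-2}\rangle\!\rangle \cong G*_{\langle x\rangle}H=\Pi.
\]
The identical computation applies to $W\# Y'$, because $\pi_1(Y')\cong H$ with the same monodromy $\sigma$, and orientation plays no role in $\pi_1$. This gives $\alpha_0$ as the identity of $\Pi$ under the two identifications.

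For the second homotopy modules I would work in the universal cover and apply Mayer--Vietoris to the splitting of $\widehat X$ along the separating $3$-sphere of the connected sum, modified by the surgery. The pieces $\Sigma_i^\circ$ are aspherical up to the boundary spheres. Hence $\pi_2$ is generated as a $\ZZ[\Pi]$-module by the lifts of the summing spheres together with the contribution of the surgery $2$-handle. Lemma~\ref{lem:TT-fiber-module} gives the model for this computation: lifted boundary spheres give induced modules $\ZZ[\Pi]\otimes_{\ZZ[A_i]}\ZZ$-type pieces. None of this data sees the orientation of $\Sigma_1$, so the same presentation of $\pi_2$ results for both manifolds. That presentation yields $\beta_0$, and this reproduces \cite[Proposition~3.2]{PS85}.

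The main obstacle is the third part, the nonexistence of a compatible pair $(\alpha,\beta)$ carrying $k_{\widehat X}$ to $k_{\widehat X'}$. My strategy would be to restrict along the inclusion $A_1\hookrightarrow\Pi$. The module $\pi_2$ restricted to $A_1$ should contain a summand whose $k$-invariant component in $H^3(A_1;\ZZ)\cong\ZZ$ is the image of the fundamental class $\pm[\Sigma_1]$, because $\Sigma_1$ is aspherical and $\Sigma_1^\circ$ carries it. Under $\Sigma_1\mapsto-\Sigma_1$ this class changes sign. I would then argue the following about any compatible $(\alpha,\beta)$.
\begin{enumerate}
\item The isomorphism $\alpha$ must carry $A_1$ to a conjugate of $A_1$. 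I would prove this via the graph-of-groups structure of $\Pi$ and the fact that the $A_i$ are maximal aspherical $3$-manifold subgroups.
\item The map $\beta$ then acts on the relevant $\ZZ$-summand by $\pm1$.
\item Matching the $k$-invariants forces $\alpha|_{A_1}$ to induce $-1$ on $H^3(A_1;\ZZ)$ up to that sign.
\end{enumerate}
By asphericity of $\Sigma_1$, the automorphism $\alpha|_{A_1}$ is realized by a self-homotopy equivalence of $\Sigma_1$, so it would be an orientation-reversing homotopy equivalence, which contradicts the hypothesis. The case where $\alpha$ exchanges $A_1$ and $A_2$ must be handled the same way using $\Sigma_2$. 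Controlling the sign that $\beta$ contributes (step 2) is the delicate point; this is where \cite[\S\S6--7]{PS85} is needed. Finally, $\widehat X\not\simeq\widehat X'$ follows because a homotopy equivalence would induce exactly such a compatible pair preserving the $k$-invariant.
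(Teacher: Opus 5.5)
The paper does not reprove this statement. It imports it from \cite{PS85}, uses MacLane--Whitehead (Remark~\ref{rem:postnikov-orbits}) to pass from ``no map inducing an isomorphism on $\pi_1$'' to the orbit formulation, and derives $\widehat X\not\simeq\widehat X'$ by naturality, as in your last sentence. Your $\pi_1$ computation is the van Kampen argument of Lemma~\ref{lem:shared-piece}.

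Your reconstruction of the Postnikov part, however, has a genuine gap, and it is more than deferred sign bookkeeping. If $(\alpha,\beta)$ is a compatible pair, so is $(\alpha,-\beta)$, and it sends $k_{\widehat X}$ to the negative of the image under $(\alpha,\beta)$. So whatever sign relation the $\Sigma_1$-components of $k_{\widehat X}$ and $k_{\widehat X'}$ satisfy under some compatible pair, replacing $\beta$ by $-\beta$ reverses it. Hence the $A_1$-component alone can never force $\alpha|_{A_1}$ to be orientation-reversing, and no lemma can fix the sign of $\beta$ absolutely. What separates the two classes is the \emph{relative} sign of the $\Sigma_1$- and $\Sigma_2$-components: passing from $\Sigma_1$ to $-\Sigma_1$ flips the first and leaves the second unchanged. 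The missing argument has two parts:
(i) every compatible $\beta$ acts with a single common sign on the parts of $\pi_2(\widehat X)$ carrying both components, which is a rigidity property of this particular module, read off from the Plotnick--Suciu presentation;
(ii) the $\Sigma_2$-component, being the same for both knots, forces that common sign to equal the orientation character of $\alpha$ on $A_2$, which is $+1$ by the hypothesis on $\Sigma_2$.
Only then does the $\Sigma_1$-component produce an orientation-reversing self-equivalence of $\Sigma_1$. So the hypothesis on $\Sigma_2$ is needed in every case, not only when $\alpha$ exchanges the factors. That exchange cannot even occur here, since $A_1\not\cong A_2$ for $n_1\neq n_2$.

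A test case shows that the coupling in (i) is the real content. Your steps (1)--(3) never use the surgery on $txt^{-1}x^{-2}$, so they would apply verbatim to the fibred exteriors of $K_1\#K_2$ and $K_1^\dagger\#K_2$. There the fibre $(\pm\Sigma_1\#\Sigma_2)^\circ$ is the boundary connected sum of $\Sigma_1^\circ$ and $\Sigma_2^\circ$ along a monodromy-invariant disc. Collapsing that disc is a monodromy-equivariant homotopy equivalence onto $\Sigma_1^\circ\vee\Sigma_2^\circ$ with self-map $\sigma_1\vee\sigma_2$, the same in both cases; the orientation of $\Sigma_1$ only affected how the disc was glued. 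Hence those two exteriors are homotopy equivalent. Any argument that does not use how the surgery ties the two factors together in $\pi_2(\widehat X)$ must therefore fail.

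Two smaller points. First, lifts of a $2$-sphere are permuted freely (as in Lemma~\ref{lem:TT-fiber-module}), so they contribute free summands, not $\ZZ[\Pi]\otimes_{\ZZ[A_i]}\ZZ$. Second, the orientation of $\Sigma_1$ is visible in your part-two data: it fixes the sign with which the summing sphere bounds the $\Sigma_1$-side, which is exactly how $k$ changes. So $\beta_0$ exists because the resulting module presentation is insensitive to that sign, not because the construction ignores it.
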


\begin{remark}
	\label{rem:postnikov-orbits}
Plotnick and Suciu compute the first Postnikov invariants of the two
exteriors and then analyze the action of the compatible automorphisms
of the fundamental group and second homotopy module
\cite[\S\S5--7]{PS85}. The point needed here is not a particular
coordinate representative of either cohomology class, but their
conclusion that no compatible group and module automorphisms carry
$k_{\widehat X}$ to $k_{\widehat X'}$.
	
	Equivalently, the argument in \cite[\S\S~5--7]{PS85} rules out
	a pair consisting of an automorphism of the common fundamental group
	and a compatible semilinear automorphism of the common second homotopy
	module which carries $k_{\widehat X}$ to $k_{\widehat X'}$.
	Thus, the two Postnikov classes lie in distinct orbits under compatible
	group and module automorphisms.
	
The conclusion of \cite[Theorem~1.2]{PS85} is stated there as the
non-existence of a map $\widehat X\to\widehat X'$ inducing an
isomorphism on $\pi_1$. By the realization theorem of MacLane and
Whitehead, quoted in \cite[p.~58]{PS85} and stated explicitly in
\cite[p.~3]{Suc84}, this is equivalent to the statement that no
compatible pair $(\alpha,\beta)$ carries $k_{\widehat X}$ to
$k_{\widehat X'}$, which is the form used here; indeed,
equations~(2)--(4) of \cite[\S7]{PS85} are exactly the condition that
$\alpha$ and $\beta$ preserve $k$-invariants.
\end{remark}

\subsection{Brieskorn specialization and the common meridian}

Let $n_1,n_2>5$ be distinct integers, each coprime to $6$, and set
$
\Sigma_1=\Sigma(2,3,n_1),
\;
\Sigma_2=\Sigma(2,3,n_2).
$
For $i=1,2$, take
$
K_i=\tau^2(T_{3,n_i}),
$
the $2$-twist spin of the $(3,n_i)$-torus knot. By Zeeman's
twist-spinning theorem \cite{Zee65}, $K_i$ is fibered, with punctured
fiber
$
\Sigma_i^\circ
=
\Sigma(2,3,n_i)\setminus\operatorname{int}\BB^3
$
and monodromy induced by the deck involution of the double branched
cover
$
\Sigma(2,3,n_i)\to \s^3
$
branched over $T_{3,n_i}$. 

The deck involution has a one-dimensional fixed-point set. A sufficiently
small invariant $3$-ball centered at a fixed point is equivariantly
diffeomorphic to the standard rotation ball, so its restriction to the
boundary $\s^2$ is conjugate to the order-two rotation through angle
$\pi$. Thus the invariant balls for the two inputs may be chosen so
that their boundary actions agree, and the connected-sum monodromy
$
\sigma_1\#\sigma_2
$
is well defined.

The manifolds
$
\Sigma_i=\Sigma(2,3,n_i),
\; i=1,2,
$
are integral homology $3$-spheres and, since $n_i>5$, are aspherical
Seifert fibered $3$-manifolds. By the proof of \cite[Theorem~8.2]{NR78}, an orientation-reversing homotopy
equivalence of a closed oriented aspherical Seifert $3$-manifold
forces its Seifert Euler number to vanish. For the Brieskorn homology
sphere $\Sigma(2,3,n_i)$ one has
$
\bigl|e(\Sigma(2,3,n_i))\bigr|
=
\frac{1}{6n_i},
$
with the sign depending on the orientation convention. In particular,
its Seifert Euler number is nonzero. Hence neither $\Sigma_i$ admits
an orientation-reversing homotopy equivalence. This verifies the
hypothesis of Theorem~\ref{thm:plotnick-suciu}; compare also
\cite[pp.~56--57]{PS85}.

Let $\widehat K$ and $\widehat K'$ denote the oriented $2$-knots with
exteriors $\widehat X$ and $\widehat X'$, respectively. The first
construction uses the closed fibers $\Sigma_1,\Sigma_2$, while the
primed construction uses $-\Sigma_1,\Sigma_2$.

The next lemma makes basepoint-explicit the geometric input needed for
Theorem~\ref{thm:brieskorn-same-quandle} below: the distinguished boundary
meridian coming from the common $W$-piece can be represented by the same
literal element $t\in\Pi$ in both exteriors.

\begin{lemma}[The connected-sum piece is shared by the two constructions]
	\label{lem:shared-piece}
	Let
	$
	W=\s^1_t\times\BB^3,
	$
	fix $p'\in\s^2=\partial\BB^3$, and set
	$
	\partial W=\s^1_t\times\s^2,
	\;
	\mu_t=\s^1_t\times\{p'\},
	\;
	p=(1,p')\in\mu_t\subset\partial W,
	$
	so that $\pi_1(W,p)=\langle t\rangle$ with $t=[\mu_t]$. As recalled in
	\S~\ref{sec:plotnick-suciu}, $\widehat X$ is obtained from
	$W\mathbin{\#}Y$ by surgery on an interior curve representing
	$
	r=txt^{-1}x^{-2},$ with the framing fixed above,
	and $\widehat X'$ is obtained analogously from
	$W\mathbin{\#}Y'$ by surgery on an interior curve representing the same
	word $r$. Then $p$ is a common basepoint for $\widehat X$ and
	$\widehat X'$, and there are based identifications
	$
	\pi_1(\widehat X,p)\cong\Pi
	\;\text{and}\;
	\pi_1(\widehat X',p)\cong\Pi,
	$
	under which the based loop $\mu_t$
	is sent to the same literal element $t\in\Pi$ in both groups.
\end{lemma}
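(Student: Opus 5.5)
The proof is a van Kampen computation, carried out in exactly the same way for $Y$ and $Y'$. The key observation is that the two constructions differ only inside the $Y$-summand. The pieces $W$, the boundary $\partial W$, the point $p$, and the surgery curve are common.

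First, I will arrange the connected sum so that the separating $3$-sphere and the surgery curve lie away from $\partial W$. Then $\partial W$ is a boundary component of both $W\mathbin{\#}Y$ and $W\mathbin{\#}Y'$, and it survives unchanged in $\widehat X$ and $\widehat X'$. In particular, $p\in\partial W\subset\widehat X$ and $p\in\partial W\subset\widehat X'$ serve as a common basepoint.

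Next, I will apply van Kampen to the connected sum. I choose a path from $p$ to a point on the summing sphere inside $W$, and continue it into $Y$ (respectively $Y'$) to the basepoint at which $\pi_1(Y)\cong H$ (respectively $\pi_1(Y')\cong H$) is identified. Since the summing sphere is simply connected, this gives
\[
\pi_1(W\mathbin{\#}Y,p)\cong\langle t\rangle * H ,
\]
in which $t=[\mu_t]$ literally, because $\mu_t$ lies entirely in $W$. The same holds for $Y'$. For $H$, I will use the identification $\pi_1(Y)\cong H\cong\pi_1(Y')$ recalled above. Here the element $x$ is the class of the mapping-torus circle through the basepoint of the connected-sum fiber. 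That basepoint can be taken on the sphere along which $\Sigma_1$ and $\Sigma_2$ (or $-\Sigma_1$ and $\Sigma_2$) are glued. Reversing the orientation of $\Sigma_1$ does not change the underlying space of the fiber near that sphere, nor the monodromy $\sigma$. So the circle $x$, and the based identification with $H$, can be chosen identically for both constructions.

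The surgery curve is then drawn as the same based word $r=txt^{-1}x^{-2}$, using the chosen arcs. The surgery deletes a neighbourhood $\s^1\times\BB^3$ of this curve and glues in $\DD^2\times\s^2$. By van Kampen, the only effect on $\pi_1$ is to add the relation $r=1$. This gives
\[
\pi_1(\widehat X,p)\cong(\langle t\rangle*H)/\langle\!\langle txt^{-1}x^{-2}\rangle\!\rangle\cong G*_{\langle x\rangle}H=\Pi ,
\]
and likewise for $\widehat X'$. In both cases $[\mu_t]\mapsto t$, since $\mu_t$ is disjoint from the surgery region and its class was already the literal generator $t$.

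The step I expect to be the main obstacle is justifying that the same based identification with $H$, and in particular the same element $x$ and the same representative of $r$, can be used for $Y'$ as for $Y$. My approach is to fix the connected-sum sphere and the basepoint arc on the $\Sigma_2$ side and in $W$. Because orientation reversal of $\Sigma_1$ acts as the identity on underlying sets, the identification of $\pi_1$ with $H$ then transfers verbatim, and the curve representing $r$ can be taken to be literally the same curve. Any residual ambiguity only changes the framing or the choice of basepoint path. A change of basepoint path would conjugate the identification. I will absorb this by composing with an inner automorphism of $\Pi$ that fixes $t$, which is possible because the arc from $p$ can be kept inside $W$.
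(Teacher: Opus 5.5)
Your proposal is correct and matches the paper's proof: van Kampen on $W\mathbin{\#}Y$ gives $\langle t\rangle * H$ with $t=[\mu_t]$, because $\mu_t\subset\partial W$ is untouched. Surgery on the curve then adds only the relation $r=1$, giving $\Pi$, and the identical computation is run for $W\mathbin{\#}Y'$. Your extra argument that $\pi_1(Y')\cong H$ can be identified exactly as for $Y$ (which the paper simply takes from Plotnick--Suciu) is fine. Strictly, though, $(-\Sigma_1)\mathbin{\#}\Sigma_2$ is not the same underlying space as $\Sigma_1\mathbin{\#}\Sigma_2$; it is a regluing along the neck sphere by a reflection that can be chosen to fix the basepoint and commute with the rotation, and that is what lets the presentation transfer verbatim.
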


\begin{proof}
	The connected-sum operation and the surgery are performed in the
	interior, so neither changes $\partial W=\s^1_t\times\s^2$. Hence
	$(\partial W,p)$ and the based loop $\mu_t$ include unchanged into both
	$\widehat X$ and $\widehat X'$.
	
	For $\widehat X$, van Kampen's theorem, after transporting the
	fundamental group of the $Y$-summand to the basepoint $p$ through the
	connected-sum neck, gives
	$
	\pi_1(W\mathbin{\#}Y,p)\cong \langle t\rangle * H.
	$
	Surgery on a curve representing $r=txt^{-1}x^{-2}$ then gives
	$
	\pi_1(\widehat X,p)
	\cong
	(\langle t\rangle * H)/
	\langle\!\langle txt^{-1}x^{-2}\rangle\!\rangle.
	$
	The presentation on the right is precisely
	$
	G*_{\langle x\rangle}H=\Pi,
	\;
	G=\langle t,x\mid txt^{-1}=x^2\rangle,
	$
	and the resulting isomorphism may be chosen to be the identity on the
	named generators. In particular, it sends the element
	$t=[\mu_t]\in\pi_1(W,p)$ to the element $t\in G\leq\Pi$.
	
	The same van Kampen computation applies to $\widehat X'$ using the
	common identification of the corresponding $Y'$-factor with $H$ from
	the Plotnick--Suciu calculation. Again the resulting based
	isomorphism with $\Pi$ may be chosen to be the identity on the named
	generators, and therefore again sends $[\mu_t]$ to $t\in\Pi$.
\end{proof}

\begin{remark}[Independence of the chosen identifications]
	There are two distinct uses of group identifications here.
	The based identifications in Lemma~\ref{lem:shared-piece} are used
	only to identify the distinguished positive meridians and hence the
	fundamental quandles. We do not assert that, under those particular
	identifications, the two second homotopy modules become isomorphic
	by the identity automorphism of $\Pi$.
	
	The Plotnick--Suciu module and Postnikov statements are instead
	formulated intrinsically: there exists a group isomorphism together
	with a semilinear isomorphism of the second homotopy modules, whereas
	no such compatible pair carries one first Postnikov invariant to the
	other. This assertion is unchanged by replacing either coordinate
	identification with $\Pi$ by an automorphism of $\Pi$.
\end{remark}

\begin{theorem}[The Brieskorn Plotnick--Suciu knots have the same quandle]
	\label{thm:brieskorn-same-quandle}
	For the Brieskorn specialization above, fix an orientation of the
	distinguished boundary loop
	$
	\mu_t=\s^1_t\times\{p'\},
	$
	and orient $\widehat K$ and $\widehat K'$ so that $\mu_t$ is the
	positive meridian of each knot. Under the based identifications
	$
	\pi_1(\widehat X,p)\cong\Pi
	\;\text{and}\;
	\pi_1(\widehat X',p)\cong\Pi
	$
	fixed in Lemma~\ref{lem:shared-piece}, the two positive meridians are
	represented by the same element $t\in\Pi$:
	$
	m_{\widehat K}=t=m_{\widehat K'}.
	$
	Consequently,
	$
	Q(\widehat K)
	\cong
	\Cos\bigl(\Pi,\langle t\rangle,t\bigr)
	\cong
	Q(\widehat K').
	$
	Moreover, there is a quandle isomorphism
$f\colon Q(\widehat K)\to Q(\widehat K')$ such that, under the based
identifications with $\Pi$, the induced automorphism
$
\eta_{\widehat K'}\circ\As(f)\circ\eta_{\widehat K}^{-1}
\colon \Pi\to\Pi
$
is the identity.
\end{theorem}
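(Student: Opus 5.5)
The plan is to reduce everything to Proposition~\ref{prop:quandle-peripheral} together with the based identifications of Lemma~\ref{lem:shared-piece}. First, I will check that $\mu_t$ is actually a meridian of each knot. The boundary $\partial\widehat X=\partial W=\s^1_t\times\s^2$ is the boundary of a tubular neighbourhood $\widehat K\times\DD^2$, and the $\s^2$-factor is the only essential sphere, so it corresponds to the $\widehat K$-factor. The circle $\s^1_t\times\{p'\}$ therefore bounds a meridional disk $\{p'\}\times\DD^2$ in the filling $\DD^2\times\s^2$ described in the construction. Hence, with either choice of orientation of $\widehat K$, the loop $\mu_t$ is, up to sign, a meridian, and it generates $P_{\widehat K}=\operatorname{im}\pi_1(\partial\widehat X)\cong\ZZ$. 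The same argument applies to $\widehat K'$. Reversing the orientation of the embedded sphere switches which oriented meridional disks are positive. So I can choose the orientations of $\widehat K$ and $\widehat K'$ so that the oriented boundary of a positive meridional disk is the prescribed orientation of $\mu_t$.

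Next, I take the basepoint $p\in\mu_t$ and use the constant path, so the noose $(D,\mathrm{const})$ with $\partial D=\mu_t$ gives the based meridian $[\mu_t]$. By Lemma~\ref{lem:shared-piece}, under the fixed based identifications this element is the literal $t\in\Pi$ in both groups, so $m_{\widehat K}=t=m_{\widehat K'}$. The peripheral subgroups are then $P=\langle t\rangle$ in both. Proposition~\ref{prop:quandle-peripheral} gives natural isomorphisms $Q(\widehat K)\cong\Cos(\Pi,\langle t\rangle,t)\cong Q(\widehat K')$, and I let $f$ be their composite.

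For the final claim, I will track generators. Under the coset model, a noose class $q$ corresponding to $\langle t\rangle g$ has based meridian $\eta(e_q)=g^{-1}tg$. This is computed inside $\Pi$ through the fixed identification, and it is the same formula for both knots. The map $f$ sends the class of $\langle t\rangle g$ for $\widehat K$ to the class of $\langle t\rangle g$ for $\widehat K'$. Therefore $\eta_{\widehat K'}(\As(f)(e_q))=\eta_{\widehat K'}(e_{f(q)})=g^{-1}tg=\eta_{\widehat K}(e_q)$. The elements $\eta_{\widehat K}(e_q)$ generate $\Pi$, since $\eta_{\widehat K}$ is an isomorphism. So $\eta_{\widehat K'}\circ\As(f)\circ\eta_{\widehat K}^{-1}$ agrees with the identity on a generating set, and hence is the identity.

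The main obstacle is the orientation bookkeeping in the first step. I must ensure that orienting each $2$-knot so that $\mu_t$ is positive is consistent with the convention of Definition~\ref{def:positive-meridional-disk}. I must also ensure that the primed construction, which contains the orientation-reversed summand $-\Sigma_1$, uses the same ambient orientation near $W$. I would handle this by noting that $W$ and its collar are literally identical oriented submanifolds in both constructions, and that the filling, and hence the embedding into the standard $\s^4$, is performed identically along $\partial W$. The choice of orientation on each knot is then free and made independently, so positivity of $\mu_t$ can be arranged in each case separately.
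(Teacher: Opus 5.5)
Your proposal is correct and follows essentially the same route as the paper: you orient each knot so that $\mu_t$ is positive, read off $m_{\widehat K}=t=m_{\widehat K'}$ from Lemma~\ref{lem:shared-piece}, apply Proposition~\ref{prop:quandle-peripheral}, and verify $\eta_{\widehat K'}\circ\As(f)=\eta_{\widehat K}$ on the generators via $e_q\mapsto g^{-1}tg$. The only difference is that you justify $\mu_t$ being a meridian directly from the $\DD^2\times\s^2$ filling, where it bounds $\DD^2\times\{p'\}$, whereas the paper simply cites the Plotnick--Suciu construction; your concern about ambient orientations near $W$ is unnecessary, as you yourself note, because each knot's orientation is chosen independently.
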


\begin{proof}
	By the construction of \cite[\S~1]{PS85},
	$\mu_t$ is a meridian of each of the two final $2$-knots. Since reversing the orientation of a $2$-knot reverses the
	orientation of its positive meridian, each of $\widehat K$ and
	$\widehat K'$ has a unique choice of orientation for which the fixed
	orientation of $\mu_t$ is positive.
	
	With these orientations fixed, Lemma~\ref{lem:shared-piece} gives
	based identifications with $\Pi$ under which
	$
	[\mu_t]\mapsto t
	$
	in both groups. Hence
	$
	m_{\widehat K}=t=m_{\widehat K'},
	$
	and both oriented peripheral triples are identified with
	$
	\bigl(\Pi,\langle t\rangle,t\bigr).
	$
	Proposition~\ref{prop:quandle-peripheral} therefore gives
	$
	Q(\widehat K)
	\cong
	\Cos\bigl(\Pi,\langle t\rangle,t\bigr)
	\cong
	Q(\widehat K').
	$
		For the final assertion, let
	$
	\kappa_{\widehat K}\colon
	Q(\widehat K)\to
	\Cos(\Pi,\langle t\rangle,t),
	\;
	\kappa_{\widehat K'}\colon
	Q(\widehat K')\to
	\Cos(\Pi,\langle t\rangle,t)
	$
	be the coset identifications supplied by
	Proposition~\ref{prop:quandle-peripheral}, and put
	$
	f=
	\kappa_{\widehat K'}^{-1}\circ
	\kappa_{\widehat K}.
	$
	If
	$
	\kappa_{\widehat K}(q)=\langle t\rangle g,
	$
	then, by the construction of the peripheral coset description,
	$
	\eta_{\widehat K}(e_q)=g^{-1}tg.
	$
	Since $f(q)$ corresponds to the same coset
	$\langle t\rangle g$ for $\widehat K'$, we likewise have
	$
	\eta_{\widehat K'}(e_{f(q)})=g^{-1}tg.
	$
	Therefore
	$
	\eta_{\widehat K'}\circ\As(f)
	=
	\eta_{\widehat K}
	$
	on every canonical generator of
	$\As(Q(\widehat K))$, and hence on the whole associated group.
	Thus
	$
	\eta_{\widehat K'}\circ\As(f)\circ
	\eta_{\widehat K}^{-1}
	=
	\operatorname{id}_{\Pi}.
	$
\end{proof}

\begin{theorem}
	\label{thm:main-PS}
	For every pair of distinct integers $n_1,n_2>5$, each coprime to $6$,
	the Brieskorn Plotnick--Suciu construction above yields oriented
	$2$-knots $\widehat K$ and $\widehat K'$ with the following
	properties:
	\begin{enumerate}
		\item
		their fundamental groups are isomorphic;
		
		\item
		there exist an isomorphism
		$
		\alpha_0\colon
		\pi_1(\widehat X)\xrightarrow{\cong}\pi_1(\widehat X')
		$
		and an $\alpha_0$-semilinear isomorphism
		$
		\beta_0\colon
		\pi_2(\widehat X)\xrightarrow{\cong}\pi_2(\widehat X');
		$
		
		\item
		their fundamental quandles are isomorphic:
		$
		Q(\widehat K)\cong Q(\widehat K');
		$
		
		\item
		no compatible pair $(\alpha,\beta)$, consisting of a group
		isomorphism
		$
		\alpha\colon
		\pi_1(\widehat X)\xrightarrow{\cong}\pi_1(\widehat X')
		$
		and an $\alpha$-semilinear isomorphism
		$
		\beta\colon
		\pi_2(\widehat X)\xrightarrow{\cong}\pi_2(\widehat X'),
		$
		carries $k_{\widehat X}$ to $k_{\widehat X'}$.
	\end{enumerate}
	
	In particular,
	$
	\widehat X\not\simeq\widehat X'.
	$
	
Thus, the isomorphism class of the compatible pair
$(\pi_1,\pi_2)$, together with the isomorphism class of the fundamental
quandle, does not determine the homotopy type of an oriented
$2$-knot exterior; in particular, these invariant classes do not
determine its first Postnikov invariant.
	
	Moreover, the isomorphism
	$
	f\colon Q(\widehat K)\xrightarrow{\cong}Q(\widehat K')
	$
	in {\rm(3)} may be chosen so that, under the based identifications
	of Lemma~\ref{lem:shared-piece},
	$
	\eta_{\widehat K'}
	\circ
	\As(f)
	\circ
	\eta_{\widehat K}^{-1}
	=
	\operatorname{id}_{\Pi}.
	$
\end{theorem}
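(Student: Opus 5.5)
The proof will be an assembly of results already stated, once we check that the Brieskorn specialization satisfies the hypotheses of Theorem~\ref{thm:plotnick-suciu}. Fix distinct $n_1,n_2>5$, each coprime to $6$, and set $\Sigma_i=\Sigma(2,3,n_i)$ and $K_i=\tau^2(T_{3,n_i})$. The Brieskorn discussion above provides the inputs: the $K_i$ are fibered, with punctured fibers $\Sigma_i^\circ$ and closed aspherical fibers $\Sigma_i$. The invariant balls can be chosen so that $\sigma_1\#\sigma_2$ is well defined. By the Neumann--Raymond argument and the nonvanishing Euler number $|e|=1/(6n_i)$, neither $\Sigma_i$ admits an orientation-reversing homotopy equivalence. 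The Plotnick--Suciu surgery therefore produces the exteriors $\widehat X,\widehat X'$ of $2$-knots $\widehat K,\widehat K'$ in the standard $\s^4$. We orient these knots as in Theorem~\ref{thm:brieskorn-same-quandle}, so that $\mu_t$ is a positive meridian of each.

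Items (1) and (2) then come directly from the first part of Theorem~\ref{thm:plotnick-suciu}: $\pi_1(\widehat X)\cong\Pi\cong\pi_1(\widehat X')$, and there is an isomorphism $\alpha_0$ with an $\alpha_0$-semilinear $\beta_0$. Item (3), together with the final ``moreover'' clause, is exactly Theorem~\ref{thm:brieskorn-same-quandle}. That theorem uses Lemma~\ref{lem:shared-piece} to send both positive meridians to the literal element $t\in\Pi$, then identifies both quandles with $\Cos(\Pi,\langle t\rangle,t)$ via Proposition~\ref{prop:quandle-peripheral}, and defines $f=\kappa_{\widehat K'}^{-1}\circ\kappa_{\widehat K}$, which induces $\operatorname{id}_\Pi$. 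Item (4) is the second part of Theorem~\ref{thm:plotnick-suciu}, and its hypothesis is the condition verified above. Remark~\ref{rem:postnikov-orbits} records that the Plotnick--Suciu nonexistence statement is the same as the compatible-pair formulation used here.

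The step I expect to need the most care is checking that the orientation choice in (3) is consistent with (1), (2) and (4). Reorienting a $2$-knot leaves its exterior unchanged as a space. Consequently $\pi_1$, $\pi_2$ and $k$ do not depend on how $\widehat K,\widehat K'$ are oriented, and items (1), (2), (4) hold for the orientations chosen in Theorem~\ref{thm:brieskorn-same-quandle}. It is also important that the orientation reversal $\Sigma_1\mapsto-\Sigma_1$ used by Plotnick and Suciu takes place inside the fiber summand. That reversal leaves the $W$-piece and $\mu_t$ untouched, which is why Lemma~\ref{lem:shared-piece} applies to both constructions.

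Finally, $\widehat X\not\simeq\widehat X'$ follows from (4). A homotopy equivalence would induce an isomorphism $\alpha=h_*$ on $\pi_1$ and an $\alpha$-semilinear $\beta=h_*$ on $\pi_2$, and naturality of the first Postnikov invariant would force $(\alpha,\beta)$ to carry $k_{\widehat X}$ to $k_{\widehat X'}$, contradicting (4). The concluding sentence of the theorem is then a restatement: the pairs $(\pi_1,\pi_2)$ and the quandles agree up to isomorphism, while the homotopy types, and hence the $k$-invariant orbits, differ.
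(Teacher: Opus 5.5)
Your proposal is correct and follows the paper's proof essentially verbatim. Items (1), (2) and (4) are read off from Theorem~\ref{thm:plotnick-suciu} for the Brieskorn specialization, item (3) and the final compatibility clause are Theorem~\ref{thm:brieskorn-same-quandle}, and $\widehat X\not\simeq\widehat X'$ follows from (4) by naturality of the first Postnikov invariant. Your added remark, that reorienting the knots leaves the exteriors unchanged and so does not affect (1), (2), (4), is a harmless clarification that the paper leaves implicit.
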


\begin{proof}
	Assertions {\rm(1)}, {\rm(2)}, and {\rm(4)} are the conclusions of
	Theorem~\ref{thm:plotnick-suciu} for the Brieskorn specialization
	above. Assertion {\rm(3)} and the final compatibility statement for
	the quandle isomorphism are
	Theorem~\ref{thm:brieskorn-same-quandle}.
	
	If there were a homotopy equivalence
	$
	h\colon\widehat X\to\widehat X',
	$
	then the induced maps
	$
	h_*\colon\pi_1(\widehat X)\xrightarrow{\cong}\pi_1(\widehat X')
	$
	and
	$
	h_*\colon\pi_2(\widehat X)\xrightarrow{\cong}\pi_2(\widehat X')
	$
	would form a compatible semilinear pair and, by naturality of the
	first Postnikov invariant, would carry
	$k_{\widehat X}$ to $k_{\widehat X'}$. This contradicts {\rm(4)}.
	Hence
	$
	\widehat X\not\simeq\widehat X'.
	$
\end{proof}


\subsection{Suciu's lens-space family and arbitrarily many examples}
\label{subsec:suciu-many}

We next use the alternative construction from Suciu's thesis
\cite[Chapter~III]{Suc84}. Throughout that construction, $p$ is odd
and
$
0<q<p,
\;
\gcd(p,q)=1.
$
For such $p$ and $q$, let $K_{p,q}$ denote the $2$-knot constructed
in \cite[Chapter~III, \S3]{Suc84}, and let $X_{p,q}$ be its exterior.

For fixed $p$, Suciu identifies the fundamental groups of the
$X_{p,q}$ with the same group
$
\pi_p
=
\left\langle
t,a,x
\;\middle|\;
a^p=1,\;
xax^{-1}=a^{-1},\;
txt^{-1}=x^2
\right\rangle
=
G*_{\langle x\rangle}H_p,
$
where
$
G=\langle t,x\mid txt^{-1}=x^2\rangle
$
and
$
H_p
=
\langle a,x\mid a^p=1,\;xax^{-1}=a^{-1}\rangle.
$
The parameter $q$ enters through the internal lens-space piece
$Y_{p,q}$ and not through this presentation
\cite[Chapter~III, \S3]{Suc84}. Moreover, Suciu computes
$\pi_2(X_{p,q})$ as a $\ZZ[\pi_p]$-module by a presentation
independent of $q$; see
\cite[Chapter~III, Proposition~4.1]{Suc84}.

\begin{corollary}[Arbitrarily many]
	\label{cor:arbitrarily-many}
	For every integer $N\geq2$, there exist an odd integer $p$ and
	integers
	$
	0<q_1,\dots,q_N<p,
	\;
	\gcd(p,q_i)=1,
	$
	such that, putting
	$
	\pi=\pi_p,
	$
	the oriented $2$-knots
	$
	L_1,\dots,L_N,
	\;
	L_i=K_{p,q_i},
	$
	have the following properties:
	\begin{enumerate}
		\item
		$
		\pi_1(X_{L_i})\cong\pi
		$
		for every $i$;
		
		\item
for every $i,j$ there exist a group isomorphism
$
\alpha_{ij}\colon
\pi_1(X_{L_i})\xrightarrow{\cong}\pi_1(X_{L_j})
$
and an $\alpha_{ij}$-semilinear isomorphism
$
\beta_{ij}\colon
\pi_2(X_{L_i})\xrightarrow{\cong}\pi_2(X_{L_j});
$
		
		\item their fundamental quandles are mutually isomorphic; more
		precisely,
		$
		Q(L_i)
		\cong
		\Cos(\pi,\langle t\rangle,t)
		\;
		\text{for every }i;
		$
		
		\item
for $i\neq j$, no compatible pair $(\alpha,\beta)$ consisting
of a group isomorphism
$
\alpha\colon
\pi_1(X_{L_i})\xrightarrow{\cong}\pi_1(X_{L_j})
$
and an $\alpha$-semilinear isomorphism
$
\beta\colon
\pi_2(X_{L_i})\xrightarrow{\cong}\pi_2(X_{L_j})
$
carries $k_{X_{L_i}}$ to $k_{X_{L_j}}$.
	\end{enumerate}
	
	In particular, the exteriors $X_{L_1},\dots,X_{L_N}$ are pairwise
	non-homotopy-equivalent.
\end{corollary}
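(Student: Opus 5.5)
The plan mirrors the proof of Theorem~\ref{thm:main-PS}: take items (1), (2) and (4) from Suciu's thesis, and get (3) from the peripheral argument of Lemma~\ref{lem:shared-piece} and Theorem~\ref{thm:brieskorn-same-quandle}. The first step is to choose the parameters. Suciu's main result in \cite[Chapter~III]{Suc84} says that, for fixed odd $p$, the exteriors $X_{p,q}$ and $X_{p,q'}$ have inequivalent $k$-invariants unless $q'\equiv\pm q^{\pm1}\pmod p$, which is the lens-space homotopy-type relation adapted to his setting. So I would pick $p$ odd with many residue classes modulo this equivalence; for instance a prime $p$ large enough that $(p-1)/4\geq N$. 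Then I would choose $q_1,\dots,q_N$ in distinct classes. This gives (4) directly, once it is translated into the ``no compatible pair $(\alpha,\beta)$ carries $k$ to $k$'' form by the MacLane--Whitehead realization theorem, exactly as in Remark~\ref{rem:postnikov-orbits}.

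For (1) and (2), I would invoke Suciu's identification $\pi_1(X_{p,q})\cong\pi_p$ and his $q$-independent presentation of $\pi_2(X_{p,q})$ as a $\ZZ[\pi_p]$-module \cite[Chapter~III, Proposition~4.1]{Suc84}. Composing the identification for $L_i$ with the inverse of the one for $L_j$ gives $\alpha_{ij}$. The common presentation gives $\beta_{ij}$, which is $\alpha_{ij}$-semilinear by construction.

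For (3), I would repeat Lemma~\ref{lem:shared-piece} verbatim with $Y$ replaced by the mapping-torus piece built from $Y_{p,q_i}$. The boundary $\partial W=\s^1_t\times\s^2$ is untouched by the interior connected sum and surgery, so the loop $\mu_t$ and the basepoint $p'$ are common to all $i$. Van Kampen, followed by surgery on $r=txt^{-1}x^{-2}$, gives based identifications $\pi_1(X_{L_i})\cong(\langle t\rangle*H_p)/\langle\!\langle r\rangle\!\rangle=\pi$ that are the identity on the named generators, so $[\mu_t]\mapsto t$. Orienting each $L_i$ so that $\mu_t$ is positive, which is possible since $\mu_t$ is a meridian by \cite[Chapter~III, \S3]{Suc84}, every oriented peripheral triple becomes $(\pi,\langle t\rangle,t)$. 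Proposition~\ref{prop:quandle-peripheral} then yields $Q(L_i)\cong\Cos(\pi,\langle t\rangle,t)$.

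The final statement follows from (4) as in Theorem~\ref{thm:main-PS}: a homotopy equivalence $X_{L_i}\to X_{L_j}$ would induce a compatible pair preserving $k$-invariants, by naturality.

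The main obstacle is extracting (4) in the needed form and with the right count. I need Suciu's precise classification of when two $k$-invariants lie in the same orbit, so that there are at least $N$ classes of $q$ for suitable $p$. I also need to confirm that his statement quantifies over all automorphisms of $\pi_p$ and all compatible module isomorphisms, not only those fixing his coordinates. I would first check whether his criterion is $q'\equiv\pm q^{\pm1}$ or the finer $q'\equiv\pm n^2q^{\pm1}$. In either case the number of classes grows without bound in $p$, which suffices for every $N$.
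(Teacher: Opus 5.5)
Your plan is essentially the paper's proof. You quote (1), (2) and (4) from Suciu's thesis and put (4) into $k$-invariant form by the MacLane--Whitehead theorem. You get (3) by running Lemma~\ref{lem:shared-piece} verbatim with $Y_{p,q_i}$ and $\pi_p$, orienting each $L_i$ so that $\mu_t$ is positive, and applying Proposition~\ref{prop:quandle-peripheral}. The last assertion then follows as in Theorem~\ref{thm:main-PS}.

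The one step to drop or repair is your reconstruction of the parameter count, which the paper never attempts. It simply cites Suciu's existence statement \cite[Theorem~1.1 and Chapter~III, Theorem~3.2 and \S7]{Suc84} that suitable $p,q_1,\dots,q_N$ exist for every $N$.

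Your sample choice of a large prime $p$ works only if the obstruction is the homeomorphism relation $q'\equiv\pm q^{\pm1}$. One should expect a homotopy invariant to see only the homotopy type of $L(p,q)$, in which case the relation is $q'\equiv\pm n^2q^{\pm1}$. That relation is coarser, not finer. For $p$ prime, $q^{-1}$ lies in $q\,((\ZZ/p)^{\times})^2$ and the squares have index $2$ in $(\ZZ/p)^{\times}$, so there are at most two classes. Hence ``grows without bound in $p$'' fails along primes. You would need $p$ with many distinct prime factors: a product of $k$ distinct odd primes gives at least $2^{k-1}$ classes. Alternatively, just invoke Suciu's theorem as stated.

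Your worry about coordinates is settled by the form of Suciu's conclusion. He shows that no map between distinct exteriors induces isomorphisms on $\pi_1$ and $\pi_2$, which is coordinate-free. MacLane--Whitehead converts this into the orbit statement over all compatible pairs, exactly as in Remark~\ref{rem:postnikov-orbits}.
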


\begin{proof}
Suciu proves that, for every $N\geq2$, one can choose $p$ and
$q_1,\dots,q_N$ as above so that the exteriors
$
X_{p,q_1},\dots,X_{p,q_N}
$
have fundamental groups isomorphic to $\pi_p$; that for all $i,j$
there are a group isomorphism
$\pi_1(X_{p,q_i})\to\pi_1(X_{p,q_j})$ together with a compatible
semilinear isomorphism of the second homotopy modules; and that no
map between distinct members realizes compatible isomorphisms on
$\pi_1$ and $\pi_2$;
see \cite[Theorem~1.1 and Chapter~III, Theorem~3.2 and \S7]{Suc84}.
	
	By the MacLane--Whitehead realization theorem, stated in
	\cite[p.~3]{Suc84}, the latter assertion is equivalent to saying
	that the corresponding first Postnikov invariants lie in distinct
	orbits under compatible group and module isomorphisms.
	
	It remains only to compare the fundamental quandles.
	Each $X_{p,q_i}$ is obtained by surgery on the same word
	$
	r=txt^{-1}x^{-2}
	$
	in
	$
	W\mathbin{\#}Y_{p,q_i},
	\;
	W=\s^1_t\times\BB^3;
	$
	see \cite[Chapter~III, \S3]{Suc84}.
	Hence Lemma~\ref{lem:shared-piece} applies verbatim, with
	$Y_{p,q_i}$ in place of $Y$ and $\pi_p$ in place of $\Pi$:
	under the resulting based identification
	$
	\pi_1(X_{p,q_i})\cong\pi_p,
	$
	the distinguished boundary meridian
	$
	\mu_t=\s^1_t\times\{p'\}
	$
	is represented by the same literal element $t\in\pi_p$ for every
	$i$.
	
	Orient each $L_i=K_{p,q_i}$ so that the fixed orientation of
	$\mu_t$ is positive. The orientation argument in
	Theorem~\ref{thm:brieskorn-same-quandle} then gives the common
	oriented peripheral triple
	$
	\bigl(\pi_p,\langle t\rangle,t\bigr).
	$
	Therefore Proposition~\ref{prop:quandle-peripheral} gives
	$
	Q(L_i)
	\cong
	\Cos(\pi_p,\langle t\rangle,t)
	$
	for every $i$.
	The final assertion follows from {\rm(4)} exactly as in the proof of
	Theorem~\ref{thm:main-PS}.
\end{proof}

\begin{remark}
	Plotnick and Suciu also describe in \cite[\S8]{PS85} a more
	structured multi-fiber construction producing $2^{n-1}$ examples
	from $n$ fibers. The proof of \cite[Theorem~8.1]{PS85} is given
	there as a sketch. Corollary~\ref{cor:arbitrarily-many} does not
	depend on that sketch: the existence of arbitrarily large families
	comes instead from the complete lens-space argument in
	\cite[Chapter~III, \S\S3--7]{Suc84}.
\end{remark}


\subsection{The Alexander module of the examples}

By Proposition~\ref{prop:quandle-determines-Alexander} and
Theorem~\ref{thm:brieskorn-same-quandle}, the Alexander modules of
$\widehat K$ and $\widehat K'$ are already known to be isomorphic.
We now identify their common isomorphism type explicitly.

\begin{lemma}[The Alexander module of the Plotnick--Suciu knots]
	\label{lem:PS-alexander-module}
	Let
	$
	\Pi=G*_{\langle x\rangle}H
	$
	be as above, where
	$
	G=\operatorname{BS}(1,2)
	=
	\langle t,x\mid txt^{-1}=x^2\rangle,
	$
	$
	H=H_1*_{\langle x\rangle}H_2,
	\;
	H_i=A_i\rtimes_{\sigma_i}\langle x\rangle,
	$
	and $A_i=\pi_1(\Sigma_i)$, with $\Sigma_i$ integral homology
	$3$-spheres. Let
	$
	\varepsilon\colon\Pi\to\ZZ
	$
	be the abelianisation normalized by $\varepsilon(t)=1$. Then
	$
	\Alex(\widehat K)
	\cong
	\Alex(\widehat K')
	\cong
	\Lambda/(t-2),
	$
	and consequently
	$
	\Delta_{\widehat K}(t)
	\doteq
	\Delta_{\widehat K'}(t)
	\doteq
	t-2.
	$
\end{lemma}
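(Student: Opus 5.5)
We compute $\Alex(\widehat K)=\ker\varepsilon/[\ker\varepsilon,\ker\varepsilon]$ directly from the presentation of $\Pi$ via Fox calculus. By Proposition~\ref{prop:quandle-determines-Alexander} and Theorem~\ref{thm:brieskorn-same-quandle}, the result for $\widehat K$ also holds for $\widehat K'$, since the same group $\Pi$ and the same meridian $t$ occur. Alternatively, the computation below depends only on the presentation of $\Pi$, which is literally the same for both knots.

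\textbf{Step 1: the normalized abelianisation.} In $\Pi$ the relation $txt^{-1}=x^2$ forces $\varepsilon(x)=0$. Each $A_i$ is perfect because $\Sigma_i$ is an integral homology sphere, so $\varepsilon(A_i)=0$. Hence $\varepsilon$ kills $x$ and $A_1,A_2$ and sends $t\mapsto1$, and $\ker\varepsilon$ is the normal closure of $\langle x, A_1, A_2\rangle$.

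\textbf{Step 2: the Alexander matrix.} Use a presentation of $\Pi$ with generators $t,x$ and generators $a_{i,j}$ of $A_i$. The relations are
\[
r=txt^{-1}x^{-2},
\]
the relations of $A_i$, and $x a_{i,j}x^{-1}=\sigma_i(a_{i,j})$. Apply the Fox derivatives and push them forward along $\varepsilon\colon\ZZ[\Pi]\to\Lambda$, under which $x$ and every $a_{i,j}$ map to $1$.
\begin{itemize}
\item For $r$: $\partial r/\partial x\mapsto t-2$ and $\partial r/\partial t\mapsto 1-1=0$. The $a$-columns vanish.
\item For the $A_i$-relations and the monodromy relations, the images of the $a$-columns, restricted to the $a$-variables, form exactly a presentation matrix of $H_1(A_i)\otimes\Lambda$ twisted by $\sigma_i$. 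The $x$-columns of the monodromy relations map to $1-1=0$ after abelianisation, since $\varepsilon(x)=0$.
\end{itemize}
Deleting the $t$-column, which is standard for the Alexander module with the generator $t$ mapping to $1$, we obtain a block matrix. It consists of the $1\times1$ block $(t-2)$ for the $x$-column, together with blocks presenting $\Lambda\otimes H_1(A_i)$. Those blocks are $0$ because $H_1(\Sigma_i)=0$.

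\textbf{Step 3: the main obstacle.} The delicate step is justifying that the $A_i$-blocks contribute nothing and that the off-diagonal entries from the monodromy relations do not interfere. The module presented is the $\Lambda$-module $\ker\varepsilon^{\mathrm{ab}}$. Over $\ker\varepsilon$, the subgroup $A_i$ and all its $t$-conjugates are perfect, so their images in $\ker\varepsilon^{\mathrm{ab}}$ are zero.

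It is cleanest to argue group-theoretically rather than by the matrix:
\begin{itemize}
\item $\Pi/\langle\!\langle A_1,A_2\rangle\!\rangle \cong G*_{\langle x\rangle}\langle x\rangle=G=\operatorname{BS}(1,2)$, since $H/\langle\!\langle A_1,A_2\rangle\!\rangle=\langle x\rangle$.
\item The normal closure of the perfect subgroups $A_i$ is generated by perfect subgroups, hence is perfect.
\item The kernel $N$ of $\ker\varepsilon_\Pi\to\ker\varepsilon_G$ is exactly this normal closure, so it is perfect and maps to zero in $H_1(\ker\varepsilon_\Pi)$.
\end{itemize}
Therefore $\Alex(\widehat K)\cong H_1(\ker\varepsilon_G)$.

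Finally, $\ker\varepsilon_G=\ZZ[1/2]$ is abelian, with $t$ acting by $u\mapsto 2u$ (from $txt^{-1}=x^2$ and the convention $[h]\mapsto[tht^{-1}]$). As a $\Lambda$-module this is $\Lambda/(t-2)$, generated by $x$. Its zeroth Fitting ideal is $(t-2)$, so $\Delta\doteq t-2$.
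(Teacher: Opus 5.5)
Your argument is correct, but it takes a different route from the paper. The paper builds a graph-of-spaces model $Y_G\cup_{Y_C}Y_H$ for $\Pi=G*_{\langle x\rangle}H$ and passes to the infinite cyclic cover. There the $C$- and $H$-pieces each contribute $\Lambda\otimes_{\ZZ}H_1$, because $\varepsilon$ vanishes on them. The $G$-piece contributes $\Lambda/(t-2)$, after the paper proves $\ker(\varepsilon|_G)\cong\ZZ[1/2]$ via the affine action $x(u)=u+1$, $t(u)=2u$. Mayer--Vietoris then gives $\Lambda/(t-2)$: the $H$-summand cancels against the edge summand because $\langle x\rangle\to H$ is an isomorphism on abelianisations.

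You instead kill $N=\langle\!\langle A_1,A_2\rangle\!\rangle$. Since $\Pi/N\cong G$ and $N$ is perfect, $N\subseteq[\ker\varepsilon,\ker\varepsilon]$, so $(\ker\varepsilon_\Pi)^{\mathrm{ab}}\cong(\ker\varepsilon_G)^{\mathrm{ab}}$ compatibly with conjugation by $t$. This is shorter and avoids the topological model and the Mayer--Vietoris sign bookkeeping, although you take $\ker\varepsilon_G\cong\ZZ[1/2]$ as known where the paper verifies it.

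What the paper's route buys is generality in the direction it needs next. It uses only that $\langle x\rangle\to H$ is an $H_1$-isomorphism, which is why it transfers verbatim to Suciu's groups $\pi_p=G*_{\langle x\rangle}H_p$ in Corollary~\ref{cor:suciu-alexander}. There the subgroup $\langle a\rangle\cong\ZZ/p$ to be killed is not perfect. Your argument would need the refinement that $N\subseteq[\ker\varepsilon,\ker\varepsilon]$ is all that matters; this holds there because $x\in\ker\varepsilon$ and $[x,a]=a^{-2}$ generates $\langle a\rangle$ for odd $p$.

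Finally, the step you flagged as delicate in your Fox-calculus computation is not. Since $\varepsilon(x)=0$, the $x$-entries of the monodromy rows vanish and the Jacobian (with the $t$-column deleted) is block diagonal. Each $a$-block has integer entries and presents $\Lambda\otimes_{\ZZ}\bigl(H_1(A_i)/(\mathrm{id}-\sigma_{i*})\bigr)=0$. So your Step~2 already completes the proof on its own.
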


\begin{proof}
	Since
	$
	A_i^{\mathrm{ab}}
	=
	H_1(\Sigma_i;\ZZ)
	=
	0,
	$
	each $A_i$ is perfect. Hence
	$
	H_i^{\mathrm{ab}}
	\cong
	\langle x\rangle
	\cong
	\ZZ,
	\;
	H^{\mathrm{ab}}
	\cong
	\langle x\rangle
	\cong
	\ZZ.
	$
	In $G$, the relation $txt^{-1}=x^2$ gives
	$
	[x]=2[x]
	$
	after abelianisation. Thus, $[x]=0$ and
	$
	G^{\mathrm{ab}}
	\cong
	\langle[t]\rangle
	\cong
	\ZZ.
	$
	It follows that
	$
	\varepsilon(x)=0,
	\;
	\varepsilon(H)=0.
	$
	
	Put $C=\langle x\rangle$. Choose connected CW-complexes
	$Y_G,Y_H,Y_C$ with fundamental groups $G,H,C$, respectively,
	and maps
	$
	Y_C\to Y_G,
	\;
	Y_C\to Y_H
	$
	inducing the two inclusions of $C$. Replacing these maps by mapping
	cylinders if necessary, form a graph-of-spaces model
	$
	Y=Y_G\cup_{Y_C}Y_H
	$
	with
	$
	\pi_1(Y)\cong G*_{C}H=\Pi.
	$
	Let
	$
	\widetilde Y\to Y
	$
	be the cover associated with $\ker\varepsilon$. Since
	$
	\pi_1(\widetilde Y)=\ker\varepsilon,
	$
	we have
	$
	H_1(\widetilde Y;\ZZ)
	\cong
	(\ker\varepsilon)^{\mathrm{ab}}
	\cong
	\Alex(\widehat K)
	$
as $\Lambda$-modules; by the same covering-space argument as in the
proof of Proposition~\ref{prop:quandle-determines-Alexander}, the deck
action corresponds to conjugation by an element of
$\varepsilon$-level one.
	
	Since $\varepsilon$ vanishes on $C$ and on $H$, the inverse images
	of $Y_C$ and $Y_H$ are disjoint unions of copies indexed by $\ZZ$.
	Therefore
	$
	H_1(\widetilde Y_C;\ZZ)
	\cong
	\Lambda\otimes_{\ZZ}H_1(Y_C;\ZZ)
	\cong
	\Lambda,
	$
	$
	H_1(\widetilde Y_H;\ZZ)
	\cong
	\Lambda\otimes_{\ZZ}H_1(Y_H;\ZZ)
	\cong
	\Lambda,
	$
	and
	$
	H_0(\widetilde Y_C;\ZZ)
	\cong
	H_0(\widetilde Y_H;\ZZ)
	\cong
	\Lambda.
	$
	
	On the $G$-piece, put
	$
	x_n=t^{-n}xt^n,
	\; n\geq0.
	$
	The defining relation gives
	$
	x_n^2=x_{n-1}
	\; (n\geq1).
	$
		Hence
	$
	\langle x\rangle
	\subset
	\langle x_1\rangle
	\subset
	\langle x_2\rangle
	\subset\cdots.
	$
	
	To identify this union, consider the affine action
	$
	G\to \operatorname{Homeo}^+(\mathbb R)
	$
	defined by
	$
	x(u)=u+1,
	\;
	t(u)=2u.
	$
	The relation $txt^{-1}=x^2$ is satisfied. Moreover,
	$
	x_n=t^{-n}xt^n
	$
	acts as
	$
	x_n(u)=u+2^{-n}.
	$
	Thus, every $x_n$ has infinite order. Since every element of
	$
	D:=\bigcup_{n\geq0}\langle x_n\rangle
	$
	belongs to one of the cyclic groups $\langle x_n\rangle$, the
	restriction of this affine action to $D$ is injective, and its image
	is precisely the group of translations by elements of
	$
	\bigcup_{n\geq0}2^{-n}\ZZ=\ZZ[1/2].
	$
	Therefore
	$
	D\cong\ZZ[1/2].
	$
	The union $D$ is normal in $G$: it is preserved by conjugation by $t$
	by construction, and, being abelian, by conjugation by $x$. Since it
	contains $x$ and each $x_n$ is a conjugate of $x$, it coincides with
the normal closure of $x$. Since $G/\langle\!\langle x\rangle\!\rangle
\cong\langle t\rangle\cong\ZZ$, we get
	$
	\ker(\varepsilon|_G)=D\cong\ZZ[1/2].
	$
	Conjugation by $t$ acts on $D$ by multiplication by $2$, so
	$
	H_1(\widetilde Y_G;\ZZ)
	\cong
	\Lambda/(t-2).
	$
	Moreover $\widetilde Y_G$ is connected, and hence
	$
	H_0(\widetilde Y_G;\ZZ)
	\cong
	\Lambda/(t-1).
	$
	
	The Mayer--Vietoris sequence contains
	$
	\Lambda
	\xrightarrow{\;\phi\;}
	\Lambda/(t-2)\oplus\Lambda
	\to
	H_1(\widetilde Y;\ZZ)
	\to
	\Lambda
	\xrightarrow{\;\psi\;}
	\Lambda/(t-1)\oplus\Lambda.
	$
	With the standard Mayer--Vietoris sign convention, the generators
	may be chosen so that
	$
	\phi(1)=(\overline1,-1),
	\;
	\psi(1)=(\overline1,-1).
	$
	The second component of $\psi$ is $-\operatorname{id}_{\Lambda}$,
	so $\psi$ is injective. Consequently,
	$
	H_1(\widetilde Y;\ZZ)
	\cong
	\operatorname{coker}\phi.
	$
	The homomorphism
	$
	\Lambda/(t-2)\oplus\Lambda
	\to
	\Lambda/(t-2),
	\;
	(a,b)\longmapsto a+\overline b,
	$
	has kernel precisely
	$
	\Lambda(\overline1,-1).
	$
	Therefore
	$
	H_1(\widetilde Y;\ZZ)
	\cong
	\Lambda/(t-2).
	$
	Hence
	$
	\Alex(\widehat K)
	\cong
	\Lambda/(t-2),
	\;
	\Delta_{\widehat K}(t)\doteq t-2.
	$
	
	Finally, Theorem~\ref{thm:brieskorn-same-quandle} and
	Proposition~\ref{prop:quandle-determines-Alexander} give
	$
	\Alex(\widehat K')
	\cong
	\Alex(\widehat K),
	$
	and therefore
	$
	\Alex(\widehat K')
	\cong
	\Lambda/(t-2),
	\;
	\Delta_{\widehat K'}(t)\doteq t-2.
	$
\end{proof}

\begin{corollary}
	\label{cor:suciu-alexander}
	For every member $K_{p,q}$ of Suciu's lens-space family above,
	$
	\Alex(K_{p,q})
	\cong
	\Lambda/(t-2),
	\;
	\Delta_{K_{p,q}}(t)\doteq t-2.
	$
	In particular, all the knots
	$L_1,\dots,L_N$ of
	Corollary~\ref{cor:arbitrarily-many} have the same Alexander module
	and Alexander polynomial.
\end{corollary}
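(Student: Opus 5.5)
The plan is to rerun the proof of Lemma~\ref{lem:PS-alexander-module} with $H$ replaced by $H_p=\langle a,x\mid a^p=1,\;xax^{-1}=a^{-1}\rangle$. By Corollary~\ref{cor:arbitrarily-many}(3), all members have the common quandle $\Cos(\pi_p,\langle t\rangle,t)$, so Proposition~\ref{prop:quandle-determines-Alexander} shows that all the Alexander modules are isomorphic. It therefore suffices to compute $(\ker\varepsilon)^{\mathrm{ab}}$ once, for $\pi_p=G*_{\langle x\rangle}H_p$ with $\varepsilon(t)=1$. That computation is independent of $q$, because the presentation is.

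First I determine $\varepsilon$ on the pieces. In $H_p^{\mathrm{ab}}$ we have $[a]=-[a]$ and $p[a]=0$; since $p$ is odd, $[a]=0$. Hence $H_p^{\mathrm{ab}}\cong\langle x\rangle\cong\ZZ$. As before, $[x]=0$ in $G^{\mathrm{ab}}$, so $\varepsilon(x)=\varepsilon(a)=0$ and $\varepsilon(H_p)=0$. Next I build the graph-of-spaces model $Y_G\cup_{Y_C}Y_{H_p}$, $C=\langle x\rangle$, and pass to the $\ker\varepsilon$-cover. The lifts of $Y_C$ and $Y_{H_p}$ are $\ZZ$-indexed disjoint copies, so
$H_1(\widetilde Y_C)\cong\Lambda$ and $H_1(\widetilde Y_{H_p})\cong\Lambda\otimes_\ZZ H_1(Y_{H_p})\cong\Lambda$, the latter generated by $[x]$. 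Both have $H_0\cong\Lambda$. The $G$-piece is literally the same as in Lemma~\ref{lem:PS-alexander-module}, giving $H_1\cong\Lambda/(t-2)$ and $H_0\cong\Lambda/(t-1)$.

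The Mayer--Vietoris maps are then the same as before: $\phi(1)=(\overline1,-1)$, because the inclusion $C\to H_p$ induces an isomorphism on $H_1$, and $\psi(1)=(\overline1,-1)$, which is injective. Hence $H_1(\widetilde Y)\cong\operatorname{coker}\phi\cong\Lambda/(t-2)$, and $\Delta\doteq t-2$.

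The one point needing care is that the $H$-piece contributes exactly a copy of $\Lambda$ generated by $x$. This is where oddness of $p$ enters: if $p$ were even, $H_p^{\mathrm{ab}}$ would have extra $\ZZ/2$ torsion that would survive in the cokernel. I expect this check, together with confirming that $\varepsilon$ kills $H_p$, to be the only real obstacle. Everything else is verbatim from Lemma~\ref{lem:PS-alexander-module}.
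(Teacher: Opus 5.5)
Your proposal is correct and matches the paper's proof. Oddness of $p$ gives $H_p^{\mathrm{ab}}\cong\ZZ$ generated by $[x]$, so $\varepsilon(H_p)=0$ and $C\to H_p$ is an isomorphism on $H_1$, after which the Mayer--Vietoris calculation of Lemma~\ref{lem:PS-alexander-module} applies verbatim. Your preliminary appeal to the quandle isomorphism is harmless but unnecessary, since the direct computation on $\pi_p$ with $\varepsilon(t)=1$ already covers every $q$.
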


\begin{proof}
	For
	$
	H_p
	=
	\langle a,x\mid a^p=1,\;xax^{-1}=a^{-1}\rangle,
	$
	abelianisation gives
	$
	2[a]=0,
	\;
	p[a]=0.
	$
	Since $p$ is odd, $[a]=0$, and hence
	$
	H_p^{\mathrm{ab}}
	\cong
	\langle[x]\rangle
	\cong
	\ZZ.
	$
	Thus the inclusion
	$
	\langle x\rangle\to H_p
	$
	induces an isomorphism on abelianisations. The Mayer--Vietoris
	calculation in the proof of
	Lemma~\ref{lem:PS-alexander-module} therefore applies verbatim to
	$
	\pi_p=G*_{\langle x\rangle}H_p,
	$
	and gives
	$
	\Alex(K_{p,q})\cong\Lambda/(t-2).
	$
	The assertion about the Alexander polynomial follows.
\end{proof}

\begin{remark}
	The answer is consistent with a Fox-calculus computation on the
	$G$-piece alone. If the summand $Y$ is replaced by $\s^1\times\s^3$,
	the same surgery produces a ribbon $2$-knot with group
	$\langle t,x\mid r\rangle=G$ \cite[\S3]{PS85}, whose Alexander module
	is $\Lambda/\bigl(\overline{\partial r/\partial x}\bigr)$, the bar
	denoting the map $t\mapsto t$, $x\mapsto1$. For
	$r=txt^{-1}x^{-2}$,
	$
	\frac{\partial r}{\partial x}
	=
	t+txt^{-1}\bigl(-x^{-1}-x^{-2}\bigr),
	\;
	\overline{\frac{\partial r}{\partial x}}
	=
	t-2 .
	$
	That this agrees with $\Alex(\widehat K)$ is also transparent from
	the Mayer--Vietoris calculation above. Since the groups $A_i$ are
	perfect,
	$
	H^{\mathrm{ab}}\cong\langle x\rangle\cong\ZZ,
	$
	and the inclusion
	$
	C=\langle x\rangle\to H
	$
	induces an isomorphism on abelianisations. Consequently, the
	$\Lambda$-summand contributed by the $H$-piece is cancelled by the
	corresponding edge-group summand, leaving precisely the contribution
	$\Lambda/(t-2)$ from the $G$-piece.
\end{remark}

\begin{remark}
	Proposition~\ref{prop:quandle-determines-Alexander} explains
	conceptually why the two Alexander modules in
	Lemma~\ref{lem:PS-alexander-module} must agree:
	the argument of Theorem~\ref{thm:brieskorn-same-quandle} gives
	$
	Q(\widehat K)\cong Q(\widehat K'),
	$
	and the Alexander module is determined by the fundamental quandle.
	Lemma~\ref{lem:PS-alexander-module} adds the explicit identification
	$
	\Alex(\widehat K)
	\cong
	\Alex(\widehat K')
	\cong
	\Lambda/(t-2).
	$
	Thus, the Plotnick--Suciu pair agrees not only in its compatible
	$(\pi_1,\pi_2)$ data and fundamental quandle, but also in its
	Alexander module and Alexander polynomial. Nevertheless, the first
	Postnikov invariants remain inequivalent by
	Theorem~\ref{thm:main-PS}.
\end{remark}

\section*{Acknowledgements}

This work was partially carried out during the 2026 IAS/Park City Mathematics Institute program ``Knotted Surfaces in Four-Manifolds''.
The author thanks PCMI for its stimulating environment.

\end{document}